\newtheorem{theorem}{Theorem}
\newtheorem{lemma}[theorem]{Lemma}
\newtheorem{proposition}[theorem]{Proposition}
\newtheorem{corollary}[theorem]{Corollary}
\theoremstyle{definition}
\newtheorem{example}{Example}
\newtheorem{remark}{Remark}
\newtheorem*{case}{Case}
\newcommand{\Aut}{{\mathrm{Aut}}}
\newcommand{\Gal}{{\mathrm{Gal}}}
\begin{document}
\title{Complete regular dessins of odd prime power order}
\author[1,2]{Kan Hu\thanks{hukan@zjou.edu.cn}}
\author[3]{Roman Nedela\thanks{nedela@savbb.sk}}
\author[1,2]{Na-Er Wang\thanks{wangnaer@zjou.edu.cn}}
\affil[1]{School of Mathematics, Physics and Information Science, Zhejiang Ocean University,
Zhoushan, Zhejiang 316022, People's Republic of China}
\affil[2]{Key Laboratory of Oceanographic Big Data Mining \& Application of Zhejiang Province,
Zhoushan, Zhejiang 316022, People's Republic of China}
\affil[3]{Department of Mathematics, University of West Bohemia, NTIS FAV, Pilsen, Czech Republic}
\affil[4]{Mathematical Institute of Slovak Academy of Sciences, Bansk\'a Bystrica, Slovak Republic}

\maketitle

\begin{abstract}
A dessin  is a $2$-cell embedding of a connected $2$-coloured bipartite graph into an orientable closed surface.
A dessin is regular if its group of colour- and orientation-preserving automorphisms acts regularly on the edges.
In this paper we employ group-theoretic method to determine and enumerate the isomorphism classes of regular
dessins with the complete bipartite underlying graphs of odd prime power order.\\[2mm]
\noindent{\bf Keywords}  graph embedding, dessin d'enfant, metacyclic group.\\
\noindent{\bf MSC(2010)} 20B25, 05C10, 14H57
\end{abstract}

\section{Introduction}
A \textit{dessin} $\mathcal{D}$ is an embedding $i: X\hookrightarrow C$ of a $2$-coloured connected
bipartite graph
$X$ into an orientable closed surface $C$ such that each component of $C\setminus i(X)$ is homeomorphic
to an open disc. An \textit{automorphism} of a dessin $\mathcal{D}$ is a permutation of the edges
which preserves the graph structure and  vertex-colouring, and extends to an orientation-preserving
self-homeomorphism of the supporting surface.
The set of automorphisms forms the
automorphism group of $\mathcal{D}$ under composition, and it acts semi-regularly on the edges.
If this action is transitive, and hence regular, then the dessin is called \textit{regular} as well.

For a given dessin $\mathcal{D}$, following Lando and Zvonkin~\cite{LZ2004} the
\textit{reciprocal dessin} $\mathcal{D}^*$ of $\mathcal{D}$ is the dessin obtained
from $\mathcal{D}$ by interchanging the vertex colours of $\mathcal{D}$. It follows
that $\mathcal{D}^*$ has the same underlying graph, the same automorphism group and the
same supporting surface as $\mathcal{D}$. Thus dessins with a given bipartite graph $X$ appear in \textit{reciprocal pairs} $(\mathcal{D},\mathcal{D}^*)$. In particular, a regular dessin $\mathcal{D}$
is \textit{symmetric}  if $\mathcal{D}$ is isomorphic to $\mathcal{D}^*$. It follows that symmetric dessins possess an external
symmetry transposing the vertex colours and therefore may be viewed as (arc)regular bipartite maps.

It is well known that compact Riemann surfaces and complex projective algebraic curves are equivalent
categories. By the Riemann Existence Theorem~\cite[Theorem 1.8.14]{LZ2004} every dessin on an orientable
closed surface $C$ determines a complex structure on $C$ which makes it into a Riemann surface defined
over an algebraic number field. By Bely\v{\i}'s Theorem~\cite{Belyi1979} the inverse is also true:
if a curve $C$ can be defined over some algebraic number field, then its complex structure can be
obtained from a dessin on $C$. The absolute Galois group
$\mathbb{G}=\Gal(\mathbb{\bar Q}/\mathbb{Q})$ acts naturally on the curves defined
over $\mathbb{\bar Q}$, the field of all algebraic numbers. Grothendieck observed that
this action induces a faithful action of $\mathbb{G}$
on the associated dessins~\cite{Gro1997}. Gonz\'alez-Diez and Jaikin-Zapirain
have recently shown that this action remains faithful even when restricted to regular dessins~\cite{GJ2015}.
Thus one can study $\mathbb{G}$ through its action on such simple and symmetrical combinatorial objects as
regular dessins.

Regular dessins with complete bipartite underlying graphs will be called \textit{complete regular dessins}. Because the algebraic curves associated with complete regular dessins may be viewed as generalization of Fermat curves, it is important to study complete regular dessins and the associated curves~\cite{JS1996}.

 The classification of complete regular dessins is an open problem in this field, posed by Jones in~\cite{Jones2015}. In this direction, complete symmetric  dessins have been classified in a series of
papers~\cite{DJNS2007,DJNS2010,Jones2010,JNS2007,JNS2008, KK2005, KK2008, NSZ2002}. As for the general case, it has been shown that there exists a one-to-one correspondence  between the following three categories: the isomorphism classes of complete regular dessins, equivalence classes of exact bicyclic group factorisations and certain pairs of skew-morphisms of the cyclic groups~\cite{FHNSW}.

The correspondence between complete regular dessins and exact bicyclic group factorisations establishes a group-theoretic method to classify and enumerate the complete regular dessins. Following this method, it was shown in~\cite{FL2017} that the graph $K_{m,n}$ underlies a unique edge-transitive embedding if and only if $\gcd(m,\phi(n))=\gcd(n,\phi(m))=1$ where $\phi$ is the Euler's totient
function.

The symmetric and nonsymmetric complete regular dessins with underlying graphs $K_{p^e,p^e}$ have been classified in \cite{JNS2008} and \cite{CJSW2009}. Recently, the edge-transitive circular embeddings of $K_{p^e,p^f}$ have been determined in \cite{FLQ2018}, where a circular embedding is a map whose boundary walk of each face is a simple cycle without repeated edges. In this paper we consider the classification and enumeration of complete regular dessins of odd prime power order without any additional restriction. Our main result is the following
\begin{theorem}\label{MAIN}
 Let $p$ be an odd prime, and let $\nu(d,e)$ denote the number of isomorphism classes of reciprocal pairs of regular
dessins  with complete bipartite underlying graph $K_{p^d,p^e}$, $0\leq d\leq e$. Then
\begin{itemize}
 \item[\rm(i)]if $d=0$ then $\nu(d,e)=1$,
 \item[\rm(ii)]if $0<d=e$ then $\nu(d,e)=\frac{1}{2}p^{e-1}(1+p^{e-1})$,
 \item[\rm(iii)]if $0<d<e$ then $\nu(d,e)=p^{2d-1}$.
\end{itemize}
Moreover, in each case the dessins have type $(p^d,p^e,p^e)$ and genus $\frac{1}{2}(p^d-1)(p^e-2)$.
\end{theorem}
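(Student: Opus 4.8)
The plan is to translate the enumeration into pure group theory, classify the groups that can occur, and then count automorphism orbits of generating pairs; all three cases will fall out of one computation specialised in different ways. Throughout, write $N(d,e)$ for the number of isomorphism classes of regular dessins with underlying graph $K_{p^{d},p^{e}}$ and $S(d,e)$ for the number of these that are symmetric.

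\textbf{Step 1: reduction to exact bicyclic factorisations.} Invoke the correspondence of~\cite{FHNSW}: an isomorphism class of such dessins is the same as an equivalence class of triples $(G;a,b)$, where $G$ is a finite group, $|a|=p^{d}$, $|b|=p^{e}$, $G=\langle a\rangle\langle b\rangle$ and $\langle a\rangle\cap\langle b\rangle=1$, two triples being equivalent when an isomorphism of groups carries the ordered pair $(a,b)$ to the ordered pair of the other triple. The reciprocal dessin $\mathcal D^{*}$ corresponds to $(G;b,a)$. Hence, if $d<e$ then $|a|\neq|b|$ forces $\mathcal D\not\cong\mathcal D^{*}$, so $S(d,e)=0$ and $\nu(d,e)=N(d,e)$, while if $d=e$ then $\nu(e,e)=\tfrac12\bigl(N(e,e)+S(e,e)\bigr)$. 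The case $d=0$ is immediate: then $a=1$ and $G=\langle b\rangle\cong\mathbb Z_{p^{e}}$, giving exactly one triple, whose dessin is the $p^{e}$-star of type $(1,p^{e},p^{e})$ and genus $0$; this is~(i).

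\textbf{Step 2: structure of $G$, and the type and genus.} It is classical that a finite $p$-group ($p$ odd) which is a product of two cyclic subgroups is metacyclic; hence every $G$ above is a $2$-generated metacyclic $p$-group of order $p^{d+e}$ possessing a cyclic subgroup of order $p^{e}$ with a cyclic complement of order $p^{d}$. Running these constraints through the classification of metacyclic $p$-groups ($p$ odd) reduces the possibilities to an explicit finite list, which one identifies with the split groups $G_{k}\cong\mathbb Z_{p^{e}}\rtimes_{k}\mathbb Z_{p^{d}}$ in which a generator of $\mathbb Z_{p^{d}}$ acts on $\mathbb Z_{p^{e}}$ as multiplication by an element of order $p^{k}$ in the Sylow $p$-subgroup of $(\mathbb Z/p^{e})^{\times}$; here $0\le k\le d$ when $d<e$ and $0\le k\le e-1$ when $d=e$, with $k=0$ giving the abelian group $\mathbb Z_{p^{d}}\times\mathbb Z_{p^{e}}$. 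A short computation in such a presentation gives $|ab|=p^{e}$, so every dessin in question has type $(p^{d},p^{e},p^{e})$; consequently it has $p^{d}+p^{e}$ vertices, $p^{d+e}$ edges and $p^{d+e}/p^{e}=p^{d}$ faces, and Euler's formula yields $2-2\gamma=2p^{d}+p^{e}-p^{d+e}$, i.e.\ $\gamma=\tfrac12(p^{d}-1)(p^{e}-2)$, as claimed.

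\textbf{Step 3: counting generating pairs (the crux).} Fix one of the groups $G=G_{k}$ and let $\mathcal P(G)$ be the set of pairs $(a,b)$ with $|a|=p^{d}$, $|b|=p^{e}$, $G=\langle a\rangle\langle b\rangle$ and $\langle a\rangle\cap\langle b\rangle=1$. Every member of $\mathcal P(G)$ is a generating pair of $G$, so $\Aut(G)$ acts freely on $\mathcal P(G)$ and the number of equivalence classes of triples with this $G$ equals $|\mathcal P(G)|/|\Aut(G)|$. Thus one must (a) compute $|\Aut(G_{k})|$ from the metacyclic presentation, and (b) enumerate $\mathcal P(G_{k})$, which one organises by first choosing a cyclic subgroup $B$ of order $p^{e}$, then a cyclic complement $A$ of order $p^{d}$ with $A\cap B=1$ and $AB=G$, and finally a generator of each of $A$ and $B$. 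The delicate points are: determining exactly which cyclic subgroups of order $p^{e}$ arise (the normal factor $\mathbb Z_{p^{e}}$ together with its conjugates under the complement, whose number must be computed precisely), verifying the exactness condition $A\cap B=1$ subgroup by subgroup, and treating all admissible $k$ uniformly — in particular $k=d$, where the action is as large as possible. This is the technical heart of the argument, and I expect the main obstacle to be controlling $\Aut(G_{k})$: the interaction between its ``normal'' part (acting on $\mathbb Z_{p^{e}}$) and its ``complement'' part, together with the associated stabiliser and $\mathrm{Hom}$-group bookkeeping, must be carried out in closed form in $p,d,e$ and $k$.

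\textbf{Step 4: assembling the count.} Summing $|\mathcal P(G_{k})|/|\Aut(G_{k})|$ over the admissible $k$ gives $N(d,e)$: the contribution of $G_{0}$ is always $1$, and the remaining contributions form a progression in $p$ whose partial sums telescope. For $0<d<e$ this yields $N(d,e)=p^{2d-1}$, and since $S(d,e)=0$ we get $\nu(d,e)=p^{2d-1}$, proving~(iii). For $d=e$ the same sum (now over $0\le k\le e-1$) gives $N(e,e)=p^{2e-2}$; separately, counting the triples $(G;a,b)$ that admit an automorphism of $G$ interchanging $a$ and $b$ gives $S(e,e)=p^{e-1}$; hence $\nu(e,e)=\tfrac12\bigl(p^{2e-2}+p^{e-1}\bigr)=\tfrac12 p^{e-1}\bigl(1+p^{e-1}\bigr)$, proving~(ii). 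Together with the type and genus established in Step~2, this completes the proof, the substantive work being concentrated in Step~3.
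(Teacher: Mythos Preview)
Your Step~2 contains a genuine gap that invalidates the count in~(iii) whenever $d\ge 2$. You assert that every exact $(p^d,p^e)$-bicyclic $p$-group is isomorphic to a split extension $G_k\cong\mathbb{Z}_{p^e}\rtimes_k\mathbb{Z}_{p^d}$ with the $\mathbb{Z}_{p^e}$ factor normal. This is false. The paper's Theorem~\ref{main} shows that for $1\le f<d<e$ the group
\[
\mathbf{M}_2(d,e,f)=\langle a,b\mid a^{p^d}=b^{p^e}=1,\ a^b=a^{1+p^f}\rangle
\]
also occurs, and here it is the \emph{smaller} cyclic factor that is normal; moreover for $h-d\le f<d<h<e$ the \emph{non-split} metacyclic groups $\mathbf{M}_3(d,e,h,f)$ occur as well. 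These are not isomorphic to any of your $G_k$: in $\mathbf{M}_2(d,e,f)$ one has $G/G'\cong C_{p^f}\times C_{p^e}$ of exponent $p^e$, whereas for every nonabelian $G_k$ with $d<e$ the abelianisation has exponent strictly less than $p^e$. Concretely, for $p=3$, $d=2$, $e=3$ your list contains only the three groups $\mathbf{M}_1(2,3,f)$, $f\in\{1,2,3\}$, and summing their contributions gives $1+(p-1)+p^2(p-1)=21$; the missing group $\mathbf{M}_2(2,3,1)$ contributes a further $p(p-1)=6$, and only then does one reach the correct value $p^{3}=27$.

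The reason your classification breaks down is that an exact factorisation $G=\langle a\rangle\langle b\rangle$ with $|a|=p^d\le p^e=|b|$ does \emph{not} force the larger cyclic subgroup to be normal; passing to a cyclic normal subgroup $N$ with $G/N$ cyclic, one may land in the case $G=BN$ with $N\cap B\ne 1$, which is precisely where the families $\mathbf{M}_2$ and $\mathbf{M}_3$ arise. Your argument is salvageable for $d=e$ (where indeed only the $\mathbf{M}_1$ family occurs) and for $d\le 1$, but for $2\le d<e$ you must enlarge the list of groups and repeat the $|\mathcal P(G)|/|\Aut(G)|$ computation for each of them; the paper carries this out in Lemmas~\ref{AUTO2} and~\ref{ENM2}--\ref{ENM3}. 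Note also that Step~3 as written is a plan rather than a proof: the actual determination of $|\Aut(G)|$ and $|\mathcal P(G)|$ requires nontrivial congruence analysis (cf.\ Lemmas~\ref{AUTO1}--\ref{AUTO2}), and your heuristic of ``choosing $B$ then a complement $A$'' does not by itself yield closed formulas.
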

\begin{remark}
Dessins from (i) are exceptional, in the sense that their automorphism groups are cyclic,
whereas dessins from (ii) and (iii) have noncylcic metacyclic automorphism groups;
see Theorem~\ref{main}. Moreover,  up to isomorphism there are $p^{2(e-1)}$ complete regular dessins from (ii), and among them $p^{e-1}$ are symmetric. Thus by the results obtained in \cite{FLQ2018} we may conclude that all edge-transitive embeddings of $K_{p^e,p^e}$ are circular embeddings. Finally, the most interesting case is (iii) where we find that the number $\nu(d,e)$ is independent of the larger exponent parameter $e$.
\end{remark}
\section{Preliminaries}
In this section we collect some group-theoretic notations and results be to be used later, and
sketch the algebraic theory on regular dessins with complete bipartite underlying graphs.

Let $G$ be a finite $p$-group of exponent $\exp(G)=p^e$. For $0\leq i\leq e$ we define
\[
\mho_{i}(G)=\langle g^{p^i}\mid g\in G\rangle \quad\text{and}\quad \Omega_i(G)=\langle g\in G\mid g^{p^i}=1\rangle.
\]
The \textit{lower power series} and the  \textit{upper power series} of $G$ are defined to be
\[
G=\mho_0(G)\geq \mho_1(G)\geq\mho_2(G)\geq\cdots\geq\mho_e(G)=1.
\]
and
\[
1=\Omega_0(G)\leq\Omega_1(G)\leq\Omega_2(G)\leq \cdots\leq\Omega_e(G)=G.
\]

For a positive integer $i$, a finite $p$-group $G$ is called \textit{$p^i$-abelian} if
 for any $x,y\in G$,
 \[(xy)^{p^i}=x^{p^i}y^{p^i}.
 \]
 It is clear that $G$ is $p^i$-abelian if and only if the mapping $\pi_i:G\mapsto G, a\mapsto a^{p^i}$
is a group homomorphism, and that for positive integers $i<j$, if $G$ is $p^i$-abelian then it is also
$p^j$-abelian. Moreover, a finite $p$-group $G$ is called \textit{regular} if for any $x,y\in G$ there
exists $z\in\mho_1(\langle x,y\rangle')$ such that
\[
(xy)^{p}=x^py^pz
\]
 where $\langle x,y\rangle'$ denotes the derived subgroup of the group generated by $x$ and $y$.
The following result relates $p^i$-abelianness with regularity:
\begin{lemma}{\rm\cite[Lemma 5.1.7]{XQ2010}}\label{XU}
 Let $G$ be a finite $p$-group with $\mho_i(G')=1$ where $i$ is a positive integer. If  $G$ is regular,
then it is $p^i$-abelian.
\end{lemma}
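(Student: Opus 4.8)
The plan is to reduce everything to the single power identity that governs regular $p$-groups and then exploit the monotonicity of the operators $\mho_i$. Recall (P.\ Hall; see e.g.\ Huppert, \emph{Endliche Gruppen~I}, III.10) that in a regular $p$-group $G$ one has, for all $x,y\in G$ and all $i\ge1$, an element $w\in\mho_i(\langle x,y\rangle')$ with
\[
(xy)^{p^i}=x^{p^i}y^{p^i}w .
\]
I would quote this as the one substantial input. Then the lemma is immediate: since $\langle x,y\rangle'\le G'$ and $\mho_i$ is monotone with respect to inclusion of subgroups, $\mho_i(\langle x,y\rangle')\le\mho_i(G')=1$; hence $w=1$ and $(xy)^{p^i}=x^{p^i}y^{p^i}$. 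As this holds for every pair $x,y\in G$, the map $\pi_i\colon a\mapsto a^{p^i}$ is a homomorphism, i.e.\ $G$ is $p^i$-abelian.

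If one prefers a self-contained argument, I would prove the displayed identity by induction on $i$, the case $i=1$ being precisely the definition of regularity. Put $H=\langle x,y\rangle$, a regular $p$-group, and let $(xy)^{p^{i-1}}=x^{p^{i-1}}y^{p^{i-1}}v$ with $v\in\mho_{i-1}(H')$ be given by the inductive hypothesis. Raising to the $p$-th power and using the three-factor form of the regularity identity,
\[
\bigl(x^{p^{i-1}}y^{p^{i-1}}v\bigr)^{p}=x^{p^i}y^{p^i}v^{p}c ,\qquad c\in\mho_1\bigl(\langle x^{p^{i-1}},y^{p^{i-1}},v\rangle'\bigr).
\]
It then suffices to see that $v^p$ and $c$ both lie in $\mho_i(H')$. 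For $v^p$ this holds because the subgroup $H'$ is again regular, so that $\mho_1(\mho_{i-1}(H'))=\mho_i(H')$: writing $v=u^{p^{i-1}}$ with $u\in H'$, one gets $v^p=u^{p^i}\in\mho_i(H')$. For $c$, since $x^{p^{i-1}},y^{p^{i-1}},v\in\mho_{i-1}(H)$ one has $\langle x^{p^{i-1}},y^{p^{i-1}},v\rangle'\le[\mho_{i-1}(H),H]\le\mho_{i-1}(H')$ by the standard power--commutator relation in regular $p$-groups, whence $c\in\mho_1(\mho_{i-1}(H'))=\mho_i(H')$. Therefore $(xy)^{p^i}=x^{p^i}y^{p^i}w$ with $w=v^{p}c\in\mho_i(H')=\mho_i(\langle x,y\rangle')$, and the induction closes.

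The main obstacle is clearly the verification that the error terms remain inside $\mho_i(\langle x,y\rangle')$ in the inductive step, above all the appeal to the relations $\mho_1\circ\mho_{i-1}=\mho_i$ and $[\mho_{i-1}(H),H]\le\mho_{i-1}(H')$ valid in regular $p$-groups, which are the technical heart of Hall's theory. Everything else --- monotonicity of $\mho_i$, passage to the subgroup $\langle x,y\rangle$, and the combinatorics of the three-factor expansion --- is formal; in particular, if one simply cites Hall's power formula, the whole proof collapses to the three lines of the first paragraph, which is presumably the route taken in \cite{XQ2010}.
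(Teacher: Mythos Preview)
The paper does not supply its own proof of this lemma; it merely quotes \cite[Lemma~5.1.7]{XQ2010} and uses the result as a black box. Your short argument via Hall's power formula $(xy)^{p^i}=x^{p^i}y^{p^i}w$ with $w\in\mho_i(\langle x,y\rangle')$, followed by $\mho_i(\langle x,y\rangle')\le\mho_i(G')=1$, is correct and is the standard proof---almost certainly the one in \cite{XQ2010}.

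The inductive derivation you sketch is also sound, resting on the known identities $\mho_1(\mho_{i-1}(K))=\mho_i(K)$ and $[\mho_{i-1}(H),H]\le\mho_{i-1}(H')$ valid in regular $p$-groups (Huppert, III.10). One small slip: the phrase ``writing $v=u^{p^{i-1}}$ with $u\in H'$'' tacitly uses the fact that in a regular $p$-group every element of $\mho_j$ is actually a $p^j$-th power, not merely a product of such powers---this is true (again Huppert, III.10), but it deserves to be stated explicitly rather than assumed.
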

Regular $p$-groups possess the following important property.
\begin{lemma}\label{PROPERTY}{\rm\cite[Theorem 10.5, III]{Huppert1967}}
 Let $G$ be a finite $p$-group. If $G$ is regular, then for any integer $i\geq1$ and any elements
$x,y\in G$, $(xy^{-1})^{p^i}=1$ if and only if $x^{p^i}=y^{p^i}$.
\end{lemma}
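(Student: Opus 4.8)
The final statement above is Lemma~\ref{PROPERTY}, so here is how I would prove it.

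\medskip

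This is a classical result in P.\ Hall's theory of regular $p$-groups, and the plan is to reprove it by induction on $|G|$, carrying along the two companion facts that in a regular $p$-group $\Omega_i(G)$ is exactly the set of elements of order dividing $p^i$ and $\mho_i(G)$ is exactly the set of $p^i$-th powers (the three are usually established together). First I would reduce to the two-generator case: the asserted equivalence involves only $H:=\langle x,y\rangle$, and subgroups of regular $p$-groups are again regular, so one may assume $G=\langle x,y\rangle$. If $G$ is abelian the claim is trivial, since then $(xy^{-1})^{p^i}=x^{p^i}y^{-p^i}$; so assume $G$ is nonabelian, whence $G'$ is a proper, and therefore strictly smaller, regular subgroup to which the inductive hypothesis applies.

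The main tool is the basic identity for powers of products in a regular $p$-group (a consequence of Hall's collection process): for all $a,b\in G$ the elements $(ab)^{p^i}$ and $a^{p^i}b^{p^i}$ differ by a product of $p^i$-th powers of elements of $\langle a,b\rangle'$, so in particular $(ab)^{p^i}=a^{p^i}b^{p^i}w$ with $w\in\mho_i(\langle a,b\rangle')$. Applying this with $a=xy^{-1}$ and $b=y$, so that $\langle a,b\rangle=G$, gives
\[
 x^{p^i}=(xy^{-1})^{p^i}\,y^{p^i}\,w,\qquad w\in\mho_i(G'),
\]
from which $(xy^{-1})^{p^i}=1$ and $x^{p^i}=y^{p^i}$ each imply the other modulo $\mho_i(G')$. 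If $\mho_i(G')=1$ this already finishes the proof, because then $G$ is $p^i$-abelian by Lemma~\ref{XU} and hence $(xy^{-1})^{p^i}=x^{p^i}y^{-p^i}$ outright; this serves as the base case. Otherwise $\mho_i(G')$ is a nontrivial normal subgroup of $G$ and hence contains a central subgroup $Z$ of order $p$, and the quotient $G/Z$, being regular and strictly smaller, satisfies the lemma by induction; thus $(xy^{-1})^{p^i}\in Z$ if and only if $x^{p^i}y^{-p^i}\in Z$.

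It remains to promote membership in $Z$ (equivalently, in $\mho_i(G')$) to equality with the identity. Here I would use that $G'$ is a smaller regular group, so that by the inductive hypothesis $\mho_i(G')$ consists exactly of the $p^i$-th powers of its elements: the obstructing element equals $c^{p^i}$ for some $c\in G'$, and one adjusts $y$ to $yc$ --- which alters $x^{p^i}$ and $(xy^{-1})^{p^i}$ only by further controlled factors coming from the same power-of-product identity --- so as to cancel it and fall back to the base case. Making this final reduction airtight is the main obstacle: the power-of-product identity pins $p^i$-th powers down only up to the subgroup $\mho_i(G')$, and converting that congruence into a genuine equality is exactly where the hypothesis that $G$ is regular (rather than merely $p^i$-abelian) is indispensable and where the accounting for the nested commutator corrections is most delicate; keeping the two companion facts inside the induction is what keeps this bookkeeping under control.
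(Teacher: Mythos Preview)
The paper does not prove Lemma~\ref{PROPERTY}; it is quoted from Huppert \cite[III, Satz~10.5]{Huppert1967} without argument, so there is no ``paper's proof'' to compare against and your sketch has to stand on its own.

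Your steps (1)--(4) are the standard opening moves and are fine: reduction to $G=\langle x,y\rangle$, the Hall identity giving $x^{p^i}=(xy^{-1})^{p^i}\,y^{p^i}\,w$ with $w\in\mho_i(G')$, the base case $\mho_i(G')=1$ via Lemma~\ref{XU}, and descent to $G/Z$. The gap is exactly where you place it yourself. The sentence ``adjust $y$ to $yc$ \dots so as to cancel it and fall back to the base case'' is not an argument: replacing $y$ by $yc$ perturbs $(xy^{-1})^{p^i}$ and $y^{p^i}$ by \emph{new} correction factors from $\mho_i(\langle xy^{-1}c^{-1},yc\rangle')$ and $\mho_i(\langle y,c\rangle')$, and nothing you wrote shows this process terminates rather than chasing its tail. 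Knowing that the obstruction lies in $Z$ (from the quotient) and that $\mho_i(G')$ consists of $p^i$-th powers (from the induction on $G'$) does not by itself force $w=1$.

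In the classical proof this is not handled by an ad hoc cancellation. One uses the sharper form of the power identity, $(ab)^{p^i}=a^{p^i}b^{p^i}s_1^{p^i}\cdots s_k^{p^i}$ with each $s_j\in\langle a,b\rangle'$ an explicit iterated commutator in $a,b$, together with an auxiliary lemma (proved alongside, by induction on the nilpotency class) bounding the order of such commutators: in a regular $p$-group, if $a^{p^i}=1$ then $[a,g]^{p^i}=1$ for every $g$. Applied with $a=xy^{-1}$ (which has order dividing $p^i$ by hypothesis), this kills every $s_j^{p^i}$ outright and gives $x^{p^i}=y^{p^i}$ directly. You allude to ``keeping the two companion facts inside the induction'', which is indeed the right instinct, but you never actually invoke them to annihilate $w$; as written, the proof is incomplete.
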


A finite group $G$ will be called $(m,n)$-\textit{bicyclic} if $G$ can be
factorised as a product $G=\langle a\rangle\langle b\rangle$ of two cyclic subgroups $\langle a\rangle$
and $\langle b\rangle$ of orders $m$ and $n$. In particular, if $\langle a\rangle\cap \langle b\rangle=1$,
then the factorisation $G=\langle a\rangle\langle b\rangle$  will be termed  \textit{exact}, and in these
circumstances, both the generating pair $(a,b)$ and the triple $(G,a,b)$ will be called
\textit{exact $(m,n)$-bicyclic}. Moreover, if $G$ has an automorphism transposing $a$ and $b$,
then the pair $(a,b)$ and the triple $(G,a,b)$ will be called \textit{exact $n$-isobicyclic}.
Two exact $(m,n)$-bicyclic triples $(G_i, a_i,b_i)$ $(i=1,2)$ will be called \textit{equivalent}
if the assignment $a_1\mapsto a_2$ and $b_1\mapsto b_2$ extends to a group isomorphism $G_1\to G_2$.

Given an exact $(m,n)$-bicyclic triple $(G,a,b)$, one may construct an $(m,n)$-complete regular dessin
as follows: First define a bipartite graph $X$ by taking the edges to be the
elements of $G$, and the black and white vertices to be the cosets of $g\langle a\rangle$ and
$g\langle b\rangle$ of $\langle a\rangle$ and $\langle b\rangle$ in $G$, with incidence given
by containment. Since $G=\langle a\rangle\langle b\rangle$, the cosets can be written as
\[
U=\{b^i\langle a\rangle\mid i=0,1,\ldots,n-1\}\quad\text{and}\quad V=\{a^j\langle b\rangle\mid j=0,1,\ldots,m-1\}.
\]
It is clear that $X$ is the complete bipartite graph $K_{m,n}$. Moreover,
the right multiplication of $a$ and $b$ defines a cyclic order
\[
(b^ia,b^ia^2,\ldots, b^ia^{m-1})\quad\text{and}\quad (a^jb,a^jb^2,\ldots, a^jb^{n-1})
\]
of the edges around the vertices $b^i\langle a\rangle$ and $a^j\langle b\rangle$ of $X$,
and these local orientations determine an embedding of $K_{m,n}$ into an oriented surface. The left
multiplication of $G$ induces a regular action of $G$ as a group of dessin automorphisms on the edges.
Conversely, every $(m,n)$-complete regular dessin arises in this way. In particular,
the $(m,n)$-complete regular dessin is symmetric if and only if the corresponding triple $(G,a,b)$ is
exact $n$-isobicyclic. To summarize we have
\begin{proposition}{\rm \cite{FHNSW}}\label{PROP1} The automorphism group of an $(m,n)$-complete regular
dessin is an exact $(m,n)$-bicyclic group, and for any exact $(m,n)$-bicyclic group $G$,  the
isomorphism classes of  $(m,n)$-complete regular dessins with automorphism group isomorphic to $G$
are in one-to-one correspondence with the equivalence classes of exact $(m,n)$-bicyclic generating
pairs of $G$. In particular, the symmetric $(n,n)$-complete regular dessins correspond to exact $n$-isobicyclic pairs of $G$.
\end{proposition}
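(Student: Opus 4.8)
The plan is to prove the three assertions in turn, using the explicit model $\mathcal{D}(G,a,b)$ already described, in which the edge set is $G$, the black and white vertices are the left cosets $g\langle a\rangle$ and $g\langle b\rangle$, the two local rotations are right multiplication by $a$ and by $b$, and $G$ acts as automorphisms by left multiplication. The single observation driving everything is that, for a group $A$ acting regularly on itself by left multiplication, the permutations of $A$ commuting with this action are precisely the right multiplications.

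First, to see that the automorphism group of an $(m,n)$-complete regular dessin $\mathcal{D}$ is exact $(m,n)$-bicyclic, I would set $A=\Aut(\mathcal{D})$ and use regularity to identify the edge set with $A$ after fixing a base edge, so that $A$ acts by left multiplication. The two vertex-rotations commute with $A$, so by the observation above they are right multiplications by elements $a,b\in A$. The black vertex through the base edge is then the orbit $\langle a\rangle$, whose size equals its degree $m$, forcing $|a|=m$; likewise $|b|=n$. Connectedness is equivalent to transitivity of the monodromy, giving $A=\langle a,b\rangle$, while the fact that $K_{m,n}$ is simple—a black and a white vertex share at most one edge—gives $\langle a\rangle\cap\langle b\rangle=1$. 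Counting then yields $|\langle a\rangle\langle b\rangle|=mn=|A|$, so $A=\langle a\rangle\langle b\rangle$ is an exact factorisation; conversely the construction shows that every exact bicyclic triple arises in this way.

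The heart of the argument is the second assertion, the bijection for a fixed exact $(m,n)$-bicyclic group $G$. Here I would show $\mathcal{D}(G,a_1,b_1)\cong\mathcal{D}(G,a_2,b_2)$ if and only if the triples $(G,a_1,b_1)$ and $(G,a_2,b_2)$ are equivalent. A dessin isomorphism is a bijection $\phi$ of the common edge set $G$ intertwining the two rotations, i.e.\ $\phi(g a_1)=\phi(g)a_2$ and $\phi(g b_1)=\phi(g)b_2$ for all $g$. Composing with a left multiplication—itself an automorphism of the target dessin—I may normalise $\phi(1)=1$. The hard part is to upgrade this normalised combinatorial bijection to a group automorphism: using the two intertwining relations together with the crucial fact that $(a_1,b_1)$ generates $G$, an induction on word length in $a_1,b_1$ forces $\phi(gh)=\phi(g)\phi(h)$, so $\phi\in\Aut(G)$ with $\phi(a_1)=a_2$ and $\phi(b_1)=b_2$; conversely any such automorphism is visibly a dessin isomorphism. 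This is exactly equivalence of triples, and checking that the identification of the abstract automorphism group with $G$ is determined only up to $\Aut(G)$ confirms that isomorphism classes of dessins with automorphism group $\cong G$ match equivalence classes of exact bicyclic generating pairs.

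Finally, for the symmetric case I would observe that passing to the reciprocal dessin interchanges the colour classes, hence interchanges the two rotations, so that $\mathcal{D}(G,a,b)^{*}=\mathcal{D}(G,b,a)$ (this requires $m=n$ for the type to match). By the bijection just established, $\mathcal{D}(G,a,b)$ is symmetric precisely when $\mathcal{D}(G,a,b)\cong\mathcal{D}(G,b,a)$, i.e.\ when some $\psi\in\Aut(G)$ satisfies $\psi(a)=b$ and $\psi(b)=a$, which is exactly the condition that $(a,b)$ be exact $n$-isobicyclic. The main technical obstacle throughout is the homomorphism-forcing step in the second part; everything else is bookkeeping with the left/right multiplication conventions and elementary coset counting.
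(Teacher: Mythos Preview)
Your proof is correct and follows the same construction the paper sketches in the paragraph preceding the proposition: edges as group elements, vertices as cosets, local rotations by right multiplication, and $G$ acting by left multiplication. The paper itself does not give a detailed argument here---it records the construction, says ``Conversely, every $(m,n)$-complete regular dessin arises in this way'', and defers to the cited reference \cite{FHNSW}---so your account is in fact more complete, but the route (centraliser of the left regular action, coset counting to get the exact factorisation, and reducing dessin isomorphism to an automorphism of $G$ sending one generating pair to the other) is exactly the intended one.
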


\begin{proposition}\label{PROP2}
 Let $G$ be an exact $(m,n)$-bicyclic group, and  let $\mathcal{P}(G)$ and $\mathcal{B}(G)$
denote the sets of exact $(m,n)$-bicyclic pairs and exact $n$-isobicyclic pairs of $G$ if $m=n$, respectively.  Define
\[
\nu(G)=\frac{|\mathcal{P}(G)|}{|\Aut(G)|}
\quad\text{and}\quad\nu_0(G)=\frac{|\mathcal{B}(G)|}{|\Aut(G)|}.
 \]
 Then
\begin{itemize}
\item[\rm(i)]if $m=n$, then up to isomorphism $K_{n,n}$ underlies $\nu(G)$ regular dessins and
$\nu_0(G)$ symmetric regular dessins;
\item[\rm(ii)]if $m\neq n$,  then up to isomorphism $K_{m,n}$ underlies $\nu(G)$ reciprocal pairs
of regular dessins.
\end{itemize}
\end{proposition}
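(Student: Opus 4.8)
The plan is to recast both parts as counting orbits of a free group action, the translation being supplied entirely by Proposition~\ref{PROP1}. By that proposition the isomorphism classes of $(m,n)$-complete regular dessins having $G$ as automorphism group are in bijection with the equivalence classes of exact $(m,n)$-bicyclic generating pairs of $G$; and for pairs of one fixed group $G$ the equivalence relation is simply ``lies in the same $\Aut(G)$-orbit'' (an exact bicyclic pair automatically generates $G$, so an isomorphism carrying one pair to another is an element of $\Aut(G)$), so these equivalence classes are exactly the orbits of the action $\sigma\cdot(a,b)=(\sigma(a),\sigma(b))$ of $\Aut(G)$ on $\mathcal{P}(G)$. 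The same proposition identifies the symmetric $(n,n)$-dessins with the equivalence classes of exact $n$-isobicyclic pairs, i.e.\ with the $\Aut(G)$-orbits that lie inside $\mathcal{B}(G)$.

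The one substantive point is that this $\Aut(G)$-action is \emph{free}: if $\sigma\in\Aut(G)$ fixes a pair $(a,b)\in\mathcal{P}(G)$ then it fixes $a$ and $b$, hence fixes $G=\langle a,b\rangle$ pointwise, so $\sigma=1$. Consequently every orbit on $\mathcal{P}(G)$ has size $|\Aut(G)|$, so $|\Aut(G)|$ divides $|\mathcal{P}(G)|$, the number $\nu(G)$ is a nonnegative integer, and it equals the number of orbits --- hence, by the first paragraph, the number of isomorphism classes of $(m,n)$-complete regular dessins with automorphism group $G$. Restricting the action to $\mathcal{B}(G)$ runs through the identical argument once one checks that $\mathcal{B}(G)$ is $\Aut(G)$-invariant, and this is immediate: if $\tau\in\Aut(G)$ transposes $a$ and $b$, then $\sigma\tau\sigma^{-1}$ transposes $\sigma(a)$ and $\sigma(b)$. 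This yields $\nu_0(G)$ as the number of symmetric $(n,n)$-dessins and completes~(i); it also gives the count $\nu(G)$ of isomorphism classes of $(m,n)$-dessins needed for~(ii).

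For~(ii) I would then bring in the reciprocal involution. If $\mathcal{D}$ corresponds to the triple $(G,a,b)$ then, by the explicit construction recalled in the excerpt, $\mathcal{D}^{*}$ corresponds to $(G,b,a)$, which is an exact $(n,m)$-bicyclic triple (using $G=AB\Rightarrow G=BA$ for subgroups $A,B$). When $m\neq n$ we have $|a|=m\neq n=|b|$, so no automorphism of $G$ can carry $(a,b)$ to $(b,a)$, whence $\mathcal{D}\not\cong\mathcal{D}^{*}$. Thus every reciprocal pair $\{\mathcal{D},\mathcal{D}^{*}\}$ on $K_{m,n}$ contains exactly one $(m,n)$-dessin; since $\mathcal{D}\mapsto\mathcal{D}^{*}$ is an involution on isomorphism classes it matches the $(m,n)$-dessins bijectively with the $(n,m)$-dessins (two disjoint families, as $m\neq n$), so the number of such reciprocal pairs equals the number of isomorphism classes of $(m,n)$-complete regular dessins with automorphism group $G$, which is $\nu(G)$.

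I do not anticipate a genuine obstacle: everything is formal once Proposition~\ref{PROP1} is granted, the crux being the freeness of the $\Aut(G)$-action (which is what makes the quotients $\nu(G),\nu_0(G)$ meaningful as counts). The only place demanding care is the bookkeeping in~(ii): $K_{m,n}$ and $K_{n,m}$ are literally the same graph, yet an $(m,n)$-dessin and an $(n,m)$-dessin on it are different $2$-coloured objects, and it is precisely the hypothesis $m\neq n$ that forces $\mathcal{D}\not\cong\mathcal{D}^{*}$ and so makes the reciprocal pair the correct unit of enumeration.
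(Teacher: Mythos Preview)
Your proposal is correct and follows essentially the same line as the paper's proof: both invoke Proposition~\ref{PROP1} to translate the problem into counting $\Aut(G)$-orbits on $\mathcal{P}(G)$ (resp.\ $\mathcal{B}(G)$), use the semi-regularity (freeness) of that action to get the quotient formula, and handle~(ii) by observing that $m\neq n$ forces $\mathcal{D}\not\cong\mathcal{D}^{*}$. Your version is if anything slightly more explicit in justifying freeness and the $\Aut(G)$-invariance of $\mathcal{B}(G)$, but the argument is the same.
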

\begin{proof}We note that if $K_{m,n}$ underlies an $(m,n)$-complete regular dessin $\mathcal{D}$,
it also underlies an $(n,m)$-complete regular dessin $\mathcal{D}^*$, the reciprocal dessin of
$\mathcal{D}$ obtained by swapping the vertex colours of $\mathcal{D}$. Accordingly, every exact
$(m,n)$-bicyclic pair $(a,b)$ of $G$ corresponds to an exact $(n,m)$-bicyclic pair
$(b,a)$ of $G$. In particular, $\mathcal{D}$ is symmetric if and only if $\mathcal{D}\cong \mathcal{D}^*$,
or equivalently, the pair $(a,b)$ is exact $n$-isobicyclic.

 Since the action of $\Aut(G)$ on the set $\mathcal{P}(G)$ is semi-regular, by Proposition~\ref{PROP1}
the number $\nu(G)$ defined above is the number of isomorphism classes of $(m,n)$-complete regular dessins
with automorphism group isomorphic to $G$. If $m=n$, then $\nu(G)$ is precisely the number of isomorphism
classes of regular dessins with underlying graph $K_{n,n}$ and automorphism group isomorphic to $G$, of
which the number of  symmetric ones is $\nu_0(G)$. On the other hand, if $m\neq n$ then every $(m,n)$-complete
regular is not isomorphic to any $(n,m)$-complete regular dessin. Since complete regular dessins occur in
reciprocal pairs,  $\nu(G)$
is equal to the number of reciprocal pairs of regular dessins with underlying graphs $K_{m,n}$ and with
automorphism group isomorphic to $G$, as claimed.
\end{proof}
\begin{example}\label{UNIQUE}
For the abelian group defined by the presentation
\[
G=\langle a,b\mid a^m=b^n=[a,b]=1\rangle,
 \]
the triple $(G,a,b)$ determines a reciprocal pair of regular dessins with underlying graph $K_{m,n}$,
and every exact $(m,n)$-bicyclic triple of $G$ is equivalent to $(G,a,b)$.
Therefore, the graph $K_{m,n}$ underlies at least one reciprocal pair of regular dessins.
In \cite[Corollary 10]{FHNSW} it is shown that $K_{m,n}$ underlies a unique reciprocal pair of
regular dessins if and only if $\gcd(m,\phi(n))=\gcd(n,\phi(m))=1$; see also~\cite{FL2017}. In case of
$m=1$ the group $G$ is cyclic, and the underlying graph of the corresponding unique reciprocal pair of
complete regular dessins is the $n$-star $K_{1,n}$.
\end{example}

\section{Exact bicyclic $p$-groups}
In this section for odd primes $p$ we classify the automorphism groups of regular dessins  with complete
bipartite underlying graphs $K_{p^d,p^e}$.

The following result is fundamental.
\begin{lemma}{\rm\cite[Theorem 11.5, III]{Huppert1967}}\label{HUP}
Let $p>2$ be a prime. If $G $ is a finite bicyclic $p$-group, then $G$ is metacyclic.
\end{lemma}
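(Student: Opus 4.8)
I would argue by induction on $|G|$, using from the excerpt only Lemma~\ref{XU} together with the standard facts about regular $p$-groups. Write $G=\langle a\rangle\langle b\rangle$ with $|a|=p^{m}\ge|b|=p^{n}$. If $G$ is abelian then it is generated by the two elements $a,b$, so $G\cong\mathbb{Z}_{p^{i}}\times\mathbb{Z}_{p^{j}}$ for some $i,j$ and is metacyclic; hence I may assume $G$ is nonabelian, and then $G$ has a subgroup $N\le Z(G)$ of order $p$. The quotient $G/N=\langle aN\rangle\langle bN\rangle$ is again a bicyclic $p$-group of smaller order, so by induction it is metacyclic: there is $M\trianglelefteq G$ with $N\le M$ and with $M/N$ and $G/M$ cyclic. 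Since $N$ is central and $M/N$ is cyclic, $M=\langle x\rangle N$ for a suitable $x$ is abelian and $2$-generated, i.e. $M\cong\mathbb{Z}_{p^{s}}\times\mathbb{Z}_{p^{t}}$ with $s\ge t\ge0$. If $t=0$ then $M$ is cyclic, and being normal with cyclic quotient it exhibits $G$ as metacyclic. The whole difficulty is therefore the case $t\ge1$, in which I must \emph{replace} the rank-$2$ abelian normal subgroup $M$ by a cyclic one.

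For $t\ge1$ I would use the oddness of $p$ in two essential places: the cyclicity of $\Aut$ of a cyclic $p$-group, and the regularity of $p$-groups of class smaller than $p$. Since $G/M$ is cyclic and is the product of the cyclic groups $\langle a\rangle M/M$ and $\langle b\rangle M/M$, one of them equals $G/M$; after possibly interchanging $a$ and $b$ I may assume $G=M\langle a\rangle$, and then $C:=\langle a\rangle\cap M$ is a cyclic subgroup of $M$ centralized by $M$ (abelian) and by $\langle a\rangle$ (abelian), so $C\le Z(G)$ and $C\trianglelefteq G$. The plan is then to descend — passing to quotients that remain bicyclic, such as $G$ modulo $\mho_{1}$ of the relevant abelian normal subgroup, and using the control that the oddness of $p$ gives over the automorphism that a generator of $G/M$ induces on $M$ — to the case in which $G$ has nilpotency class $2$ with $G'=\langle c\rangle\cong\mathbb{Z}_{p}$ and $c=[a,b]$. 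There, since the class is $2<p$, $G$ is a regular $p$-group with $\mho_{1}(G')=1$, so Lemma~\ref{XU} shows $G$ is $p$-abelian; consequently $x\mapsto x^{p}$ is an endomorphism of $G$, $a^{p},b^{p}\in Z(G)$, and $\mho_{1}(G)=\langle a^{p}\rangle\langle b^{p}\rangle$ is central of rank at most $2$. If $c\in\langle a\rangle$ then $\langle a\rangle$ contains $G'$, hence is normal in $G$, with $G/\langle a\rangle=\langle b\langle a\rangle\rangle$ cyclic, so $G$ is metacyclic; the same argument applies if $c\in\langle b\rangle$.

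The hard case, which I expect to be the main obstacle, is $c\notin\langle a\rangle\cup\langle b\rangle$: here the \emph{exact} product structure, not merely the fact that $G$ is $2$-generated, has to be used. I would compare the order of $\langle a\rangle\cap\langle b\rangle$ with that of the corresponding intersection in $G/G'$, use $p$-abelianness to evaluate $\mho_{1}(G)$ and $\Omega_{1}(G)$ (one should find $[G:\mho_{1}(G)]=p^{2}$ and $\Omega_{1}(G)\cong\mathbb{Z}_{p}\times\mathbb{Z}_{p}$), and then search for an element $g$ with $c\in\langle g\rangle$, $\langle g\rangle\trianglelefteq G$ and $G/\langle g\rangle$ cyclic, the admissible choices of $g$ being governed by the invariant-factor decomposition of $G/G'$. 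That the factorisation itself, and not just $2$-generation, is indispensable here is shown by $\mathbb{Z}_{p}\wr\mathbb{Z}_{p}$, which is $2$-generated and non-metacyclic but is not a product of two cyclic subgroups; and the oddness of $p$ is what makes the two inputs above — regularity and the cyclicity of $\Aut$ of a cyclic $p$-group — available. For a fully self-contained account one could instead quote Huppert's structural results on products of two cyclic $p$-groups directly~\cite{Huppert1967}.
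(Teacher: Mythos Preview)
The paper does not actually prove this lemma: it is stated with a bare citation to Huppert~\cite[Theorem~11.5,~III]{Huppert1967} and used as a black box, so there is no ``paper's own proof'' to compare your attempt against.

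As for the proposal itself, your inductive setup and the reduction to the situation where the pulled-back normal subgroup $M$ is abelian of rank~$2$ are sound, and your remarks about why oddness of $p$ is essential (regularity, cyclicity of $\Aut(\mathbb{Z}_{p^k})$, the contrast with $\mathbb{Z}_p\wr\mathbb{Z}_p$) are correct and well aimed. But the argument, as you yourself flag, is not complete: the passage ``The plan is then to descend\dots to the case in which $G$ has nilpotency class $2$ with $G'\cong\mathbb{Z}_p$'' is only a plan, not a proof, and in the residual ``hard case'' $c\notin\langle a\rangle\cup\langle b\rangle$ you describe what you \emph{would} compute without actually producing the cyclic normal subgroup with cyclic quotient. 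That is the crux of Huppert's theorem, and it does require real work---one standard route is to show that in an odd-order bicyclic $p$-group the Frattini quotient $G/\Phi(G)$ has order at most $p^2$ and then invoke the characterisation of metacyclic $p$-groups via $\Phi(G)$ being cyclic (equivalently, $|G/\Phi(G)G'|\le p^2$ together with a rank argument on $G'$). Your sketch gestures at the right invariants ($\mho_1(G)$, $\Omega_1(G)$, the intersection $\langle a\rangle\cap\langle b\rangle$) but stops short of the decisive step. Since the paper treats the result as a citation, either quoting Huppert directly---as you suggest in your last sentence---or filling in the missing step via the Frattini/Burnside-basis route would be acceptable; what you have now is a reasonable outline but not yet a proof.
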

The following H\"older's Theorem on metayclic group is well known.
\begin{lemma}{\rm\cite[Theorem 20]{Zass}}\label{Hold}
A metacyclic group $G$ with cyclic normal subgroup $N$ of order $n$ and with cyclic factor group
$F$ of order $m$ has two generators $x,y$ with the defining relations
\begin{align}\label{RELN}
G=\langle x,y\mid x^n=1,y^m=x^t,y^{-1}xy=x^r\rangle
\end{align}
and with the numerical conditions
\begin{equation}\label{NMM}
r^m\equiv 1\pmod{n}\quad\text{and}\quad t(r-1)\equiv0\pmod{n}.
\end{equation}
Conversely, for any positive integers $m,n,r$ and $t$ satisfying the numerical conditions in \eqref{NMM},
a metacyclic group of order $mn$ with the previous given properties
is defined by the three relations in \eqref{RELN}.
\end{lemma}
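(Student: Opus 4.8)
The plan is to handle the two directions of the equivalence separately: the forward implication is a routine extraction of a presentation from the extension $1\to N\to G\to F\to1$, while the converse is where the two numerical conditions in \eqref{NMM} are actually needed.

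For the forward direction, write $N=\langle x\rangle$ with $|x|=n$ and choose $y\in G$ whose image generates the cyclic quotient $F=G/N$ of order $m$; then $\{x,y\}$ generates $G$ and $|G|=|N|\,|F|=mn$. Because $N\trianglelefteq G$, conjugation by $y$ is an automorphism of the cyclic group $\langle x\rangle$, hence $y^{-1}xy=x^r$ for some $r$ with $\gcd(r,n)=1$; and $y^mN=N$ forces $y^m=x^t$ for some $t$. Conjugating $y^m=x^t$ by $y$ gives $x^{rt}=x^t$, i.e.\ $t(r-1)\equiv0\pmod n$, and applying $y^{-1}xy=x^r$ a total of $m$ times, while noting that $y^m=x^t$ centralises $x$, gives $x=y^{-m}xy^m=x^{r^m}$, i.e.\ $r^m\equiv1\pmod n$. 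Finally, in the abstract group $\widetilde G$ defined by the presentation \eqref{RELN} one may use $xy=yx^r$, $x^n=1$ and $y^m=x^t$ to rewrite every element in the form $x^iy^j$ with $0\le i<n$ and $0\le j<m$ (here $\gcd(r,n)=1$, a consequence of $r^m\equiv1\pmod n$, is what makes the rewriting terminate), so $|\widetilde G|\le mn$; since the obvious map $\widetilde G\to G$ is an epimorphism and $|G|=mn$, it is an isomorphism, and $G$ is defined by \eqref{RELN}.

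For the converse, fix $m,n,r,t$ satisfying \eqref{NMM}. Then $\gcd(r,n)=1$, so $\sigma\colon x\mapsto x^r$ is an automorphism of $C_n=\langle x\rangle$, and $\sigma^m=\mathrm{id}$ since $r^m\equiv1\pmod n$. Put $\ell=n/\gcd(n,t)$, form the semidirect product $H=C_n\rtimes C_{m\ell}$ in which a fixed generator $y$ of $C_{m\ell}$ acts on $C_n$ through $\sigma$, and set $c=x^{-t}y^m$. Using $t(r-1)\equiv0$ one sees that $y$ centralises $x^t$, and using $r^m\equiv1$ one sees that $x$ centralises $y^m$, so $c$ lies in the centre of $H$; a short order count then gives $|c|=\ell$ and $\langle c\rangle\cap\langle x\rangle=1$. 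Hence $G_0:=H/\langle c\rangle$ has order $|H|/\ell=mn$, its elements $\bar x,\bar y$ satisfy $\bar x^n=1$, $\bar y^{-1}\bar x\bar y=\bar x^r$ and $\bar y^m=\bar x^t$, the subgroup $\langle\bar x\rangle$ is normal of order $n$, and $G_0/\langle\bar x\rangle$ is cyclic of order $m$ generated by $\bar y$. Thus $G_0$ is a metacyclic group of order $mn$ with exactly the stated invariants, and since $\widetilde G$ (defined by \eqref{RELN}) maps onto $G_0$ while $|\widetilde G|\le mn$ by the rewriting argument above, $\widetilde G\cong G_0$; this is the asserted group.

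The crux of the argument is the converse, and more precisely the step upgrading the abstract presentation from ``order at most $mn$'' to ``order exactly $mn$'': the upper bound is automatic, but exhibiting a genuine group of order $mn$ satisfying \eqref{RELN} requires choosing the right extension of $C_n$ by $C_m$, and this is precisely where \eqref{NMM} is indispensable — $r^m\equiv1\pmod n$ makes the prescribed action have order dividing $m$, and together with $t(r-1)\equiv0\pmod n$ it makes $c=x^{-t}y^m$ generate a normal (indeed central) subgroup of $H$ along which one can factor. One could instead invoke the classification of cyclic extensions of $C_n$ by $C_m$ with the given action, in which the relevant obstruction vanishes precisely because $t(r-1)\equiv0\pmod n$ places $x^t$ in the fixed subgroup $C_n^{\langle\sigma\rangle}$, but the hands-on construction above avoids the cohomological overhead.
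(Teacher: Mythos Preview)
The paper does not actually prove this lemma: it is stated with a citation to Zassenhaus~\cite[Theorem 20]{Zass} and no argument is given. There is therefore nothing in the paper to compare your proof against.

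That said, your proof is correct and self-contained. The forward direction is the standard extraction of a presentation from a cyclic-by-cyclic extension, and your verification of the two congruences in \eqref{NMM} is clean. For the converse, your construction via the enlarged semidirect product $H=C_n\rtimes C_{m\ell}$ and the central quotient by $\langle x^{-t}y^m\rangle$ is a nice concrete way to realise the desired extension without invoking cohomology; the order computations and the centrality of $c$ check out as you describe. This is essentially the classical argument one finds in Zassenhaus, so your write-up fills in exactly what the paper leaves to the reference.
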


Note that if $p>2$ then every finite $p$-group with cyclic derived subgroup is
regular~\cite[Theorem 10.2, III]{Huppert1967}; thus by Lemma \ref{HUP} every finite bicyclic $p$-group
is a regular $p$-group.

\begin{lemma}
Let $p>2$ be a prime. If $G$ is metacyclic, then $G$ admits an exact bicyclic factorisation.
\end{lemma}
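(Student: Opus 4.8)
The plan is to argue by induction on $|G|$, using that a metacyclic $p$-group with $p>2$ has cyclic derived subgroup and hence is a \emph{regular} $p$-group (by the remark following Lemma~\ref{Hold}); recall that a regular $p$-group satisfies $\mho_i(G)=\{g^{p^i}\mid g\in G\}$ and $|\mho_1(G)|\cdot|\Omega_1(G)|=|G|$. If $G$ is cyclic then $G=G\cdot\langle 1\rangle$ is an exact $(|G|,1)$-bicyclic factorisation, and if $|G|\le p^{2}$ one argues directly (when $G$ is elementary abelian of rank $2$, take two distinct cyclic subgroups of order $p$). So assume $G$ is noncyclic with $|G|>p^{2}$; since $G$ is metacyclic and noncyclic, $d(G)=2$, so $\mho_1(G)=\Phi(G)$ has index $p^{2}$, whence $|\Omega_1(G)|=p^{2}$, and $\mho_1(G)$ is a nontrivial metacyclic group (a subgroup of $G$) of order $|G|/p^{2}$. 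The goal is to produce cyclic subgroups $\langle a\rangle$, $\langle b\rangle$ with $\langle a\rangle\cap\langle b\rangle=1$ and $|\langle a\rangle|\cdot|\langle b\rangle|=|G|$; then $G=\langle a\rangle\langle b\rangle$ is forced and the factorisation is exact.

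Case $1$: $\mho_1(G)$ is cyclic, with generator $a_{0}$. By regularity there is $a\in G$ with $a^{p}=a_{0}$; since $a_{0}$ generates $\mho_1(G)$ it does not lie in $\mho_2(G)=\mho_1(\mho_1(G))$, which forces $a\notin\mho_1(G)$ and hence $|a|=p\,|a_{0}|=|G|/p$. As $|\langle a\rangle\cap\Omega_1(G)|=|\Omega_1(\langle a\rangle)|=p<p^{2}=|\Omega_1(G)|$, I may pick $b\in\Omega_1(G)\setminus\langle a\rangle$, necessarily of order $p$. Then $\langle a\rangle\cap\langle b\rangle=1$ and $|\langle a\rangle|\cdot|\langle b\rangle|=|G|$, as required.

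Case $2$: $\mho_1(G)$ is noncyclic, hence of rank $2$. By the induction hypothesis $\mho_1(G)=A_{0}B_{0}$ with $A_{0}=\langle a_{0}\rangle$, $B_{0}=\langle b_{0}\rangle$ cyclic and $A_{0}\cap B_{0}=1$; both factors are nontrivial, and since $A_{0}B_{0}=\mho_1(G)$ surjects onto the rank-$2$ group $\mho_1(G)/\Phi(\mho_1(G))$, the images of $A_{0}$ and of $B_{0}$ are each one-dimensional there, so neither $A_{0}$ nor $B_{0}$ lies in $\Phi(\mho_1(G))\supseteq\mho_2(G)$. Choose $a,b\in G$ with $a^{p}=a_{0}$, $b^{p}=b_{0}$; exactly as in Case $1$, $a,b\notin\mho_1(G)$, so $|a|=p\,|a_{0}|$, $|b|=p\,|b_{0}|$ and $|\langle a\rangle|\cdot|\langle b\rangle|=p^{2}|A_{0}||B_{0}|=p^{2}|\mho_1(G)|=|G|$. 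It remains to prove $\langle a\rangle\cap\langle b\rangle=1$: here $\langle a\rangle\cap\mho_1(G)=\langle a^{p}\rangle=A_{0}$ and $\langle b\rangle\cap\mho_1(G)=B_{0}$, so the cyclic group $C:=\langle a\rangle\cap\langle b\rangle$ meets $\mho_1(G)$ in $A_{0}\cap B_{0}=1$; a nontrivial cyclic subgroup disjoint from $\mho_1(G)$ must have order $p$, and then $C=\Omega_1(\langle a\rangle)\le\mho_1(\langle a\rangle)\le\mho_1(G)$ (using $|a|\ge p^{2}$), a contradiction. Hence $C=1$.

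I expect the decisive step to be this last disjointness $\langle a\rangle\cap\langle b\rangle=1$: after lifting the generators of $\mho_1(G)$ through $p$-th powers one must be sure the two cyclic subgroups still intersect trivially, and this is exactly where regularity of $G$ is needed, via $G'\le\mho_1(G)$, $\mho_1(G')\le\mho_2(G)$ and the identity $(xy)^{p}=x^{p}y^{p}z$ with $z\in\mho_1(\langle x,y\rangle')$, which keep control of $p$-th powers modulo $\Phi(\mho_1(G))$ and guarantee that $a_{0}\notin\mho_2(G)$ survives the lift as $a\notin\mho_1(G)$. As a cross-check, a direct computation from the Hölder presentation $G=\langle x,y\mid x^{p^{\alpha}}=1,\ y^{p^{\beta}}=x^{t},\ y^{-1}xy=x^{r}\rangle$ of Lemma~\ref{Hold} confirms a large part of this: replacing $y$ by $x^{v}y$ gives $(x^{v}y)^{p^{\beta}}=x^{vS+t}$ with $S=\sum_{i=0}^{p^{\beta}-1}r^{-i}$, and, $p$ being odd, the lifting-the-exponent lemma yields $v_{p}(S)=\min(\alpha,\beta)$, so that solving $vS\equiv-t\pmod{p^{\alpha}}$ makes $G=\langle x\rangle\langle x^{v}y\rangle$ an exact bicyclic factorisation whenever $\min(\alpha,\beta)\le v_{p}(t)$.
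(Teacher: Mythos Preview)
Your proof is correct, but it follows a genuinely different route from the paper's. The paper argues directly from the H\"older presentation
\[
G=\langle a,b\mid a^{p^l}=1,\ a^{p^m}=b^{p^n},\ a^b=a^r\rangle
\]
for a non-split metacyclic $p$-group: since $b^{p^n}=(a^{p^{m-n}})^{p^n}$ when $m\ge n$, Lemma~\ref{PROPERTY} (regularity) gives $(ba^{-p^{m-n}})^{p^n}=1$, and then $G=\langle a\rangle\langle ba^{-p^{m-n}}\rangle$ is exact by the product formula; the case $m<n$ is symmetric. So the paper produces the complementary cyclic factor in one step by adjusting a generator along the explicit relation.

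Your argument instead proceeds by structural induction, lifting an exact factorisation of $\mho_1(G)$ through $p$-th roots and using the identities $\mho_i(G)=\{g^{p^i}\}$ and $|\mho_1(G)|\cdot|\Omega_1(G)|=|G|$ for regular $p$-groups to control orders and intersections. This avoids any computation with the defining relations and isolates cleanly where regularity enters (surjectivity of $g\mapsto g^p$ onto $\mho_1(G)$, and $\mho_2(G)=\mho_1(\mho_1(G))$). The paper's approach is shorter and yields the factorisation explicitly in terms of the given presentation, which is convenient for the later classification work; your approach is more conceptual and would transfer to other settings where one knows regularity but not a specific presentation. One small point worth making explicit in your write-up: the step ``$d(G)=2$, so $\mho_1(G)=\Phi(G)$'' uses $G'\le\mho_1(G)$, which holds here because $G'=\langle x^{r-1}\rangle$ with $r\equiv1\pmod p$, or equivalently because $G/\mho_1(G)$ is a metacyclic group of exponent~$p$ and hence of order at most~$p^2$. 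Your closing cross-check via $(x^vy)^{p^\beta}=x^{vS+t}$ is essentially the paper's method in slightly different coordinates.
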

\begin{proof}
It is evident that every split metacyclic $p$-group admits an exact bicyclic factorisation, so it
suffices to consider non-spit metacyclic $p$-group $G$.
Without loss of generality we may assume that $G$ has a presentation
\[
G=\langle a,b\mid a^{p^l}=1, a^{p^m}=b^{p^n}, a^b=a^r\rangle,
\]
where $1\leq m<l$. By Proposition~\ref{PROPERTY} If $m\geq n$ then $ba^{-p^{m-n}}=1$ and
$G=\langle a\rangle\langle ba^{-p^{m-n}}\rangle$, whereas if $m<n$ then $ab^{-p^{n-m}}=1$
and $G=\langle b\rangle\langle ab^{-p^{n-m}}\rangle$. By the product formula in either case
the factorisation must be exact.
\end{proof}

The following technical results will be useful.
\begin{lemma}{\rm \cite{DH2017}}\label{EXP}
Let $G$ be a bicyclic $p$-group with a bicyclic factorisation $G=\langle\alpha\rangle\langle\beta\rangle$ where
$| \alpha|\geq|\beta|$. Then $\exp(G)=|\alpha|$. In particular if the factorisation is exact,
then for every exact bicyclic factorisation $G=\langle \alpha'\rangle\langle \beta'\rangle$ with
$| \alpha'|\geq|\beta'|$ we have $|\alpha'|=|\alpha|$ and $|\beta'|=|\beta|$.
\end{lemma}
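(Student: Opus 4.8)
The plan is to use the observation made above that, for $p$ odd, every finite bicyclic $p$-group is a regular $p$-group (Lemma~\ref{HUP} together with the quoted fact that a $p$-group with cyclic derived subgroup is regular), and then to recover the exponent from the torsion subgroups $\Omega_i(G)$. Write $p^{a}=|\alpha|$ and $p^{b}=|\beta|$, so that $b\le a$. Since $\alpha\in G$ we have $\exp(G)\ge p^{a}$ automatically, and the content of the statement is the reverse inequality $\exp(G)\le p^{a}$.

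The first step is to note that in the regular $p$-group $G$ one has
\[
\Omega_{i}(G)=\{g\in G\mid g^{p^{i}}=1\}\qquad\text{for every }i,
\]
that is, the set of elements annihilated by $p^{i}$ is already a subgroup, hence equals the subgroup $\Omega_i(G)$ it generates. This is precisely where regularity enters: by Lemma~\ref{PROPERTY}, $x^{p^{i}}=y^{p^{i}}=1$ implies $(xy^{-1})^{p^{i}}=1$, so that set is closed under $(x,y)\mapsto xy^{-1}$.

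Next I would apply this with $i=a$. Both generators of $G$ lie in $\Omega_{a}(G)$, because $\alpha^{p^{a}}=1$ and $\beta^{p^{a}}=(\beta^{p^{b}})^{p^{a-b}}=1$; since $\Omega_{a}(G)$ is a subgroup and $G=\langle\alpha\rangle\langle\beta\rangle$ is generated by $\alpha$ and $\beta$, it follows that $\Omega_{a}(G)=G$, i.e.\ $g^{p^{a}}=1$ for every $g\in G$. Hence $\exp(G)\le p^{a}$, and therefore $\exp(G)=p^{a}=|\alpha|$. (An alternative would be to derive from Lemma~\ref{XU} that $G$ is $p^{a}$-abelian and then compute $(\alpha^{s}\beta^{t})^{p^{a}}=\alpha^{sp^{a}}\beta^{tp^{a}}=1$; but first bounding $\exp(G')$ so as to apply Lemma~\ref{XU} is more work than the $\Omega$-argument, so I would prefer the latter.)

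For the "in particular" clause, observe that the argument just given used only $|\alpha|\ge|\beta|$ and $G=\langle\alpha\rangle\langle\beta\rangle$, and not exactness. Hence applying it to a second factorisation $G=\langle\alpha'\rangle\langle\beta'\rangle$ with $|\alpha'|\ge|\beta'|$ gives $|\alpha'|=\exp(G)=|\alpha|$. If both factorisations are exact, then $\langle\alpha\rangle\cap\langle\beta\rangle=\langle\alpha'\rangle\cap\langle\beta'\rangle=1$, so the product formula gives $|G|=|\alpha|\,|\beta|=|\alpha'|\,|\beta'|$, and cancelling $|\alpha|=|\alpha'|$ yields $|\beta|=|\beta'|$. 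I expect the only delicate point to be the identification of $\Omega_{a}(G)$ with the set of $p^{a}$-torsion elements, which genuinely fails for irregular $p$-groups; everything else is routine bookkeeping with element orders.
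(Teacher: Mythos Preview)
The paper does not include a proof of this lemma; it is quoted from the reference \cite{DH2017} without argument. Your proof is correct as written, and uses precisely the tools the paper has already assembled (Lemma~\ref{HUP}, the remark that for $p>2$ a $p$-group with cyclic derived subgroup is regular, and Lemma~\ref{PROPERTY}). The key step---that in a regular $p$-group the $p^{i}$-torsion elements already form a subgroup---is exactly the standard consequence of Lemma~\ref{PROPERTY}, and your deduction that $G=\Omega_{a}(G)$ follows immediately. The ``in particular'' clause is handled cleanly via $\exp(G)$ and the product formula.

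One point worth flagging: your argument explicitly assumes $p$ is odd (to invoke Lemma~\ref{HUP} and the regularity criterion), whereas the lemma as stated in the paper does not carry that hypothesis. Since the cited source \cite{DH2017} concerns $2$-groups, the authors presumably intend the statement for all primes. Your proof does not cover $p=2$, because bicyclic $2$-groups need not be metacyclic, and metacyclic $2$-groups need not be regular (e.g.\ dihedral groups). However, every use of Lemma~\ref{EXP} in this paper is for odd $p$, so your restriction is harmless for the paper's purposes; if you wanted a proof valid for $p=2$ as well, a different argument (not relying on regularity) would be needed.
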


\begin{lemma}{\rm\cite[lemma 9]{HNW2014}}\label{HNW}
Let $G=AB$ be a finite abelian group, then $G$ is cyclic if and only if $\gcd(|A|/|A\cap B|,|B|/|A\cap B|)=1$.
\end{lemma}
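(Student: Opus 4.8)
The plan is to reduce the biconditional to the Sylow subgroups of $G$, using the standard fact that a finite abelian group is cyclic if and only if each of its Sylow subgroups is. Throughout I set $D=A\cap B$, $d=|D|$, $a=|A|/d$ and $b=|B|/d$, so that the product formula gives $|G|=|A||B|/|D|=abd$, and the claim becomes: $G$ is cyclic iff $\gcd(a,b)=1$. I take $A$ and $B$ to be cyclic, as in the bicyclic situation of interest; this hypothesis is genuinely needed, since otherwise $A=B=G\cong C_p\times C_p$ already violates the statement.

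For the forward implication I would argue arithmetically inside the cyclic group $G$: every subgroup is cyclic and is the unique subgroup of its order, so $A\cap B$ coincides with the unique subgroup of order $\gcd(|A|,|B|)$, that subgroup being contained in both $A$ and $B$. Hence $d=\gcd(|A|,|B|)=\gcd(ad,bd)=d\gcd(a,b)$, which forces $\gcd(a,b)=1$.

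For the converse I would fix a prime $p$ and pass to Sylow $p$-subgroups. First I would check that the factorisation localises: since $G$ is abelian, $A_pB_p$ is a $p$-subgroup of $G$, and because $A_p\cap B_p$ is a $p$-subgroup of $D$ while the Sylow $p$-subgroup $D_p$ of $D$ lies in both $A_p$ and $B_p$, one has $A_p\cap B_p=D_p$; an order count then gives $|A_pB_p|=|A_p||B_p|/|D_p|=|G_p|$, so $G_p=A_pB_p$. Now $\gcd(a,b)=1$ means $p$ divides at most one of $a$ and $b$, and I would split into the three cases $p\nmid ab$, $p\mid a$, $p\mid b$. Comparing the $p$-parts of $|A|$, $|B|$ and $|D|$ in each case shows that one of $A_p$, $B_p$ already contains the other, so $G_p$ equals $A_p$ or $B_p$ and is therefore cyclic, being a Sylow subgroup of the cyclic group $A$ or $B$. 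Since $p$ was arbitrary, every Sylow subgroup of $G$ is cyclic, hence $G$ is cyclic.

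I do not expect a serious obstacle here: the lemma is essentially bookkeeping once one commits to the Sylow-by-Sylow reduction. The one point that really has to be watched is the cyclicity of $A$ and $B$, without which the statement fails; after that, the only mild subtlety is verifying that passing to $p$-parts respects the factorisation, i.e. that $G_p=A_pB_p$ with $A_p\cap B_p=D_p$, and the three-case comparison of $p$-parts carries the modest remaining content.
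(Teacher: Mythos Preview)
The paper does not supply its own proof of this lemma; it is quoted from \cite{HNW2014} and invoked without argument, so there is no in-paper proof to compare against.

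Your argument is correct. The forward implication is clean: in a cyclic group every subgroup is determined by its order, so $A\cap B$ must be the unique subgroup of order $\gcd(|A|,|B|)$, giving $d=\gcd(ad,bd)$ and hence $\gcd(a,b)=1$. For the converse, the localisation step $G_p=A_pB_p$ with $A_p\cap B_p=D_p$ is verified correctly (the key being that in an abelian group the $p$-elements of $D$ are exactly the $p$-elements of $A$ lying in $B$), and the three-case analysis then shows that one of $A_p$, $B_p$ contains the other, so $G_p$ is cyclic.

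Your remark that cyclicity of $A$ and $B$ is genuinely needed is a valid correction to the statement as printed: the example $A=B=G\cong C_p\times C_p$ has $|A|/|A\cap B|=|B|/|A\cap B|=1$ yet $G$ is not cyclic. In the paper this hypothesis is implicit, since the lemma is only ever applied to the quotient $\bar G=\bar A\bar B$ arising from a bicyclic factorisation, where $\bar A$ and $\bar B$ are images of cyclic groups.
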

\begin{lemma}{\rm \cite[Lemma 6.2.3]{XQ2010}}\label{NUM}
Let $p$ be an odd prime, and $n$ a positive integer. Then the
multiplicative group $U(p^n)$ of $\mathbb{Z}_{p^n}$ is cyclic of order $\phi(p^n)=p^{n-1}(p-1)$, and its unique
 subgroup of order $p^i$ $(0\leq i\leq n-1)$ consists of elements of the form $kp^{n-i}+1$ where $k\in\mathbb{Z}_{p^i}$.
\end{lemma}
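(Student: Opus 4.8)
The plan is to prove the two assertions of the lemma in turn: first that $U(p^n)$ is cyclic of order $\phi(p^n)=p^{n-1}(p-1)$, and then, using cyclicity, to pin down the unique subgroup of order $p^i$ explicitly.

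\textbf{Cyclicity.} The order $|U(p^n)|$ is the number of residues in $\{1,\dots,p^n\}$ prime to $p$, which is $p^n-p^{n-1}=\phi(p^n)$. Reduction modulo $p$ is a surjective homomorphism $U(p^n)\to U(p)$ with kernel $P=\{x\in U(p^n):x\equiv 1\pmod p\}$ of order $p^{n-1}$; since $\gcd(p^{n-1},p-1)=1$, $P$ is the Sylow $p$-subgroup of $U(p^n)$. Now $U(p)$ is a finite subgroup of the multiplicative group of the field $\mathbb{Z}_p$, hence cyclic of order $p-1$. For $P$ I would show $P=\langle 1+p\rangle$ via the standard induction
\[
(1+p)^{p^{k}}\equiv 1+p^{k+1}\pmod{p^{k+2}}\qquad(k\ge 0);
\]
the inductive step expands $(1+up^{k+1})^{p}$ by the binomial theorem, and it is precisely here that $p$ being odd is used, so that $\binom{p}{2}u^{2}p^{2(k+1)}=\tfrac{p-1}{2}u^{2}p^{2k+3}$ is divisible by $p^{k+3}$ (this is exactly the point where the $p=2$ analogue fails). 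It follows that $1+p$ has order $p^{n-1}$ in $U(p^n)$, so $P$ is cyclic. Finally, lifting a primitive root $g$ modulo $p$ and raising it to the power $p^{n-1}$ yields an element whose reduction modulo $p$ is still a primitive root (by Fermat), hence of order exactly $p-1$; thus $U(p^n)$ is the internal direct product of two cyclic subgroups of coprime orders $p^{n-1}$ and $p-1$, and is therefore cyclic of order $\phi(p^n)$.

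\textbf{The subgroup of order $p^i$.} A cyclic group has a unique subgroup of each order dividing its order, so for $0\le i\le n-1$ let $H_i\le U(p^n)$ be the unique subgroup of order $p^i$. Consider the reduction homomorphism $U(p^n)\to U(p^{n-i})$; it is surjective, so its kernel $S_i$ has order $\phi(p^n)/\phi(p^{n-i})=p^{n-1}(p-1)/(p^{n-i-1}(p-1))=p^{i}$. Being a subgroup of order $p^i$, $S_i$ coincides with $H_i$ by uniqueness. But $S_i$ consists exactly of the units of $\mathbb{Z}_{p^n}$ congruent to $1$ modulo $p^{n-i}$, that is, the residues $1+kp^{n-i}$ with $k\in\mathbb{Z}_{p^i}$ (each such residue is automatically prime to $p$). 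This is the asserted description.

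\textbf{Main obstacle.} The only step with real content is the cyclicity of $U(p^n)$, and within it the order computation for $1+p$, which is the classical lemma where the oddness of $p$ is essential; everything else is counting together with elementary facts about cyclic groups. Alternatively one may simply invoke the classical theorem that $U(p^n)$ is cyclic for odd $p$ and proceed directly to the identification of $H_i$ as above.
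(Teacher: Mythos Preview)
Your proof is correct. Note, however, that the paper does not prove this lemma at all: it is simply quoted from the cited reference \cite{XQ2010} and used as a black box, so there is no argument in the paper to compare yours against. What you have supplied is the standard textbook proof (order computation for $1+p$ via the binomial theorem, plus the kernel-counting identification of the order-$p^i$ subgroup), and it is entirely sound; in particular your identification of $H_i$ with the kernel of the reduction $U(p^n)\to U(p^{n-i})$ is the clean way to obtain the explicit description without further computation.
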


The following theorem classifies the automorphism groups of $(p^d,p^e)$-complete regular dessins for odd primes $p$.
\begin{theorem}\label{main}
Let $p>2$ be a prime, and let $d$ and $e$ be positive integers, $d\leq e$.
Then the automorphism group $G$ of a complete regular dessin
with underlying graph $K_{p^d,p^e}$
is isomorphic to one of the following groups:
\begin{itemize}
\item[\rm(i)]$\mathbf{M}_1(d,e,f)=\langle a,b\mid a^{p^e}=b^{p^d}=1, a^b=a^{1+p^f}\rangle$
where either $1\leq d\leq f\leq e\leq d+f$,
or $1\leq f<d<e\leq d+f$.
\item[\rm(ii)]$\mathbf{M}_2(d,e,f)=\langle a,b\mid a^{p^d}=b^{p^e}=1, a^b=a^{1+p^f}\rangle$
where $1\leq f<d<e.$
\item[\rm(iii)]$\mathbf{M}_3(d,e,h,f)=\langle a,b\mid a^{p^h}=1, b^{p^{d+e-h}}=a^{p^d},
a^b=a^{1+p^f}\rangle $ where $h-d\leq f<d<h<e$.
\end{itemize}
Moreover, the groups are pairwise non-isomorphic.
\end{theorem}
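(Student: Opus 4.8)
The plan is to combine the structural input of Proposition~\ref{PROP1} with H\"older's classification of metacyclic groups and the regularity of the ambient $p$-group. Let $G$ be the automorphism group of a complete regular dessin with underlying graph $K_{p^{d},p^{e}}$, $p>2$, $1\le d\le e$. By Proposition~\ref{PROP1} the group $G$ carries an exact $(p^{d},p^{e})$-bicyclic factorisation, by Lemma~\ref{HUP} it is metacyclic, and since $p$ is odd a metacyclic $p$-group has cyclic derived subgroup and is therefore regular; thus Lemmas~\ref{XU} and~\ref{PROPERTY} are available. By Lemma~\ref{EXP} we have $|G|=p^{d+e}$ and $\exp(G)=p^{e}$. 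Apply Lemma~\ref{Hold}: choose a cyclic normal subgroup $\langle x\rangle\trianglelefteq G$ with $G/\langle x\rangle$ cyclic, and write $|x|=p^{n}$, $|G/\langle x\rangle|=p^{m}$ (so $m+n=d+e$), $y^{p^{m}}=x^{p^{c}}$ and $x^{y}=x^{r}$ with $r\equiv 1\pmod p$.

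The next step is to bring this presentation into canonical shape. Writing $r=1+p^{f}u$ with $p\nmid u$ (or $r=1$, the abelian case), Lemma~\ref{NUM} identifies the order of $r$ in $U(p^{n})$ as $p^{\,n-f}$, which forces $f+m\ge n$; and since $y$ commutes with $y^{p^{m}}=x^{p^{c}}$ one gets $p^{c}(r-1)\equiv 0\pmod{p^{n}}$, i.e. $c+f\ge n$. Replacing $y$ by $x^{w}y^{z}$ with $p\nmid z$ is an automorphism of $G$ sending $r\mapsto r^{z}$; by Lemma~\ref{NUM} one may take $r^{z}=1+p^{f}$, and after computing $(x^{w}y^{z})^{p^{m}}$ — here Lemma~\ref{XU} (with $\mho_{j}(G')=1$ for $p^{j}=|G'|$) together with the elementary estimate $(1+p^{f})^{p^{m}}-1=p^{f+m}\cdot(\text{unit})$, valid because $p$ is odd, does the bookkeeping — a suitable $w$ normalises $y^{p^{m}}$ to a canonical power $x^{p^{c}}$. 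One is left with
\[
G=\langle x,y\mid x^{p^{n}}=1,\ y^{p^{m}}=x^{p^{c}},\ x^{y}=x^{1+p^{f}}\rangle,\qquad 0\le c\le n,\ \ 1\le f\le n,\ \ c+f\ge n,\ \ f+m\ge n,
\]
and the power formula $(y^{l}x^{k})^{p^{m}}=x^{\,lp^{c}+kS}$ with $S=(r^{lp^{m}}-1)/(r^{l}-1)$, analysed via the same $p$-adic estimate, yields $\exp(G)=p^{\max(n,\,m+n-c)}=p^{e}$.

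Now comes the case analysis, which is the technical heart. If $c\ge m$ then $\max(n,m+n-c)=n$, so $n=e$, $m=d$, and $\langle x\rangle$ is a cyclic normal subgroup of order $\exp(G)$; using Lemma~\ref{PROPERTY} one checks that $b:=yx^{-p^{\,c-m}}$ has order exactly $p^{d}$ and maps onto a generator of $G/\langle x\rangle$, so $G=\langle x\rangle\rtimes\langle b\rangle\cong\mathbf{M}_{1}(d,e,f)$ with $|x|=p^{e}\ge|b|=p^{d}$. If instead $c<m$ then $e=m+n-c$ and $d=c$; one further distinguishes the subcase $m=e$ (equivalently $n=d$), where $y^{p^{m}}=1$ so $\langle y\rangle$ is a cyclic complement to the order-$p^{d}$ normal subgroup $\langle x\rangle$ and $G\cong\mathbf{M}_{2}(d,e,f)$, from the subcase $m<e$ (equivalently $d<n<e$), where Lemma~\ref{PROPERTY} shows that no element $y^{z}x^{-j}$ with $p\nmid z$ can have order $p^{m}$, so $G$ is genuinely non-split and, setting $h=n$, $G\cong\mathbf{M}_{3}(d,e,h,f)$. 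The inequalities attached to the three families are precisely what survives of $1\le f$, $c+f\ge n$, $f+m\ge n$, $0\le c\le n$ after substituting the values of $m,n,c$ forced in each case and discarding parameter values that reproduce a group already on the list (for instance $f=e$, the abelian group, or — when $d=e$ — coincidences between the $\mathbf{M}_{1}$ and $\mathbf{M}_{2}$ normal forms). Pinning down these boundary cases so that the list is both complete and irredundant is, I expect, the most delicate part of the argument.

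For the final assertion that the groups are pairwise non-isomorphic, I would use coarse invariants. First, $|G|=p^{d+e}$ and $\exp(G)=p^{e}$ recover $d$ and $e$. Second, in each family $G'=\langle a^{p^{f}}\rangle$, so $|G'|$ equals $p^{e-f}$, $p^{d-f}$, $p^{h-f}$ respectively, and $G/G'$ is $\mathbb{Z}_{p^{f}}\times\mathbb{Z}_{p^{d}}$, $\mathbb{Z}_{p^{f}}\times\mathbb{Z}_{p^{e}}$, $\mathbb{Z}_{p^{f}}\times\mathbb{Z}_{p^{\,d+e-h}}$ respectively; since $\mathbf{M}_{2}$ and $\mathbf{M}_{3}$ carry the extra conditions $f<d<e$ and $f<d<d+e-h<e$, the invariant factors of $G/G'$ are unambiguous, and a short comparison shows that no group lies in two families and that within a single family all parameters are determined by the isomorphism type of $G$. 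The residual boundary overlaps (e.g. the abelian group $\mathbb{Z}_{p^{e}}\times\mathbb{Z}_{p^{d}}=\mathbf{M}_{1}(d,e,e)$) are settled using the already-recovered values of $d$ and $e$.
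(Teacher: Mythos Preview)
Your overall strategy is essentially the paper's: Proposition~\ref{PROP1} gives an exact $(p^d,p^e)$-bicyclic group, Lemma~\ref{HUP} makes it metacyclic (hence regular), H\"older's theorem supplies a presentation which you normalise to $x^{y}=x^{1+p^{f}}$, and a case split produces the three families. The paper organises the split slightly differently, using the given factorisation $G=AB$ together with Lemma~\ref{HNW} to force $G=AN$ or $G=BN$ for the chosen cyclic normal $N$; you instead branch on the H\"older parameter $c$ versus $m$ and read off $(m,n,c)$ from $\exp(G)=p^{e}$. Both routes are valid and the non-isomorphism argument via $G'$ and $G/G'$ is exactly what the paper does (Table~\ref{TAB}).

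There is, however, one concrete error in your $\mathbf{M}_{3}$ subcase. You assert that when $c<m<e$ (so $d<n<e$), Lemma~\ref{PROPERTY} shows no element $y^{z}x^{-j}$ with $p\nmid z$ has order $p^{m}$, and conclude that $G$ is ``genuinely non-split''. The computation is right, but it only rules out a complement \emph{to the chosen $\langle x\rangle$}; it does not show $G$ is non-split as an abstract group. In fact it need not be: if $f\ge d$ then $G'=\langle x^{p^{f}}\rangle\le\langle x^{p^{d}}\rangle=\langle y^{p^{m}}\rangle\le\langle y\rangle$, so $\langle y\rangle$ (of order $p^{e}$) is normal, and the paper exhibits an order-$p^{d}$ complement, giving $G\cong\mathbf{M}_{1}(d,e,f')$. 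This is precisely why the range for $\mathbf{M}_{3}$ carries the extra inequality $f<d$, and it is genuine content, not a boundary technicality. You acknowledge that ``discarding parameter values that reproduce a group already on the list'' is delicate, but the paper actually carries this out: its Case~2 splits into the three subcases $h=e$, ($h<e$ and $d\le f$), and ($h<e$ and $f<d$), reducing the first two back to $\mathbf{M}_{1}$. Your sketch would need the same analysis to justify the stated parameter ranges and to repair the ``non-split'' claim.
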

\begin{proof}
By Proposition \ref{PROP1} the group $G$ admits an exact $(p^d,p^e)$-factorisation
$G=AB$ where $A\cong\mathrm{C}_{p^d}$, $B\cong\mathrm{C}_{p^e}$ and $A\cap B=1$.
If $G$ is abelian then $G\cong\mathbf{M}_1(d,e,e)$. In what follows we
assume that $G$ is non-abelian. By Lemma~\ref{HUP} $G$ is metacyclic and so
regular~\cite[Theorem 10.2, III]{Huppert1967}.
Thus $G$ has a cyclic normal subgroup $N$ of order $p^h$ such that
the quotient group $G/N=\bar A\bar B$ is cyclic where $\bar A=AN/N$ and $\bar B=BN/N$.
By Lemma~\ref{EXP}, $\exp(G)=p^e$, so $h\leq e$. Since $\bar G$ is a $p$-group,
by Lemma~\ref{HNW} either $G=AN$ or $G=BN$.  We distinguish two cases:
\begin{case}[1]$G=AN$.\par
Since
\[
p^e\geq p^h=|N|\geq|N:N\cap A|=|AN:A|=|G:A|=|B|=p^e,
\]
we get $h=e$ and $|N\cap A|=1$, so $G=N\rtimes A\cong\mathrm{C}_{p^e}\rtimes\mathrm{C}_{p^d}$.
By H\"older's Theorem $G$ has a presentation
\[
G=\langle a,b_1\mid a^{p^e}=b_1^{p^d}=1, a^{b_1}=a^{r}\rangle,
\]
where $r^{p^d}\equiv1\pmod{p^e}$ and $r\not\equiv1\pmod{p^e}$. By Lemma~\ref{NUM} we may assume that $r=1+kp^f$
where $k\in\mathbb{Z}_{p^{e-f}}^*$, $1\leq f<e\leq d+f$.
Let $k'$ be the modular inverse of $k$ in $\mathbb{Z}_{p^{e-f}}$, then $a^{b_1^{k'}}=a^{(1+kp^f)^{k'}}=a^{1+p^f}$.
Replacing $b_1^{k'}$ with $b$ we obtain the groups in (i) where $1\leq f<e\leq d+f$.
\end{case}

\begin{case}[2]$G=BN$.\par
If $N\cap B=1$ then as before it is easy to show that $|N|=p^d$ and
$G=N\rtimes B\cong\mathrm{C}_{p^d}\rtimes\mathrm{C}_{p^e}$,
so $G\cong\mathbf{M}_2(d,e,f)$. In what follows we assume that $N\cap B>1$, then
\[
p^{e+d}=|G|=|BN|=|N||B|/|N\cap B|=p^{h+e}/|N\cap B|,
\]
so $|N\cap B|=p^{h-d}$ where $1\leq d<h\leq e$. Then $G$ has a presentation
\[
G=\langle a_1,b_1\mid a_1^{p^h}=b_1^{p^e}=1, b_1^{p^{d+e-h}}=a_1^{sp^d}, a_1^{b_1}=a_1^{1+kp^f}\rangle,
\]
where $s$ and $k$ are positive integers coprime to $p$, $1\leq f<h$. Replacing $a_1^{sk'}$ with $a$
and $b_1^{k'}$ with $b$ where $k'$ is the modular inverse of $k$ in $\mathbb{Z}_{p^{h-f}}$
the presentation of $G$ is transformed to the form
\[
G=\langle a,b\mid a^{p^h}=b^{p^e}=1, b^{p^{d+e-h}}=a^{p^d}, a^b=a^{1+p^f}\rangle.
\]
Since $a^{p^d}=(a^{p^d})^{b}=(a^{b})^{p^d}=a^{p^d(1+p^f)}$, we have $p^f\equiv0\pmod{p^{h-d}}$, and
so $h\leq d+f$. In what follows we distinguish three subcases:

If $h=e$, then $b^{p^d}=a^{p^d}$, so by Proposition~\ref{PROPERTY} $(a^{-1}b)^{p^d}=1$.
 Thus $G=\langle a,b\rangle=\langle a,a^{-1}b\rangle\cong\mathrm{C}_{p^e}\rtimes \mathrm{C}_{p^d}$,
which is a group in (i).

If $h<e$ and $d\leq f$, then
$G'=\langle [a,b]\rangle=\langle a^{p^f}\rangle\leq \langle a^{p^d}\rangle=\langle b^{p^{d+e-h}}\rangle\leq \langle b\rangle$,
so $\langle b\rangle\unlhd G$. By Proposition~\ref{PROPERTY} we have $(b^{-p^{e-c}}a)^{p^d}=1$, so
$G=\langle a,b\rangle=\langle b, b^{-p^{e-c}}a\rangle \cong \mathrm{C}_{p^e}\rtimes \mathrm{C}_{p^d}$, again a group in (i).

We are left with the subcase where $h<e$ and $f<d$, and we obtain the groups $ \mathbf{M}_3(d,e,h,f)$ in (iii).
\end{case}

 Finally, to determine the isomorphism relations between the groups in (i)--(iii), we summarize
  the invariant types of $G'$ and $G/G'$ in Table~\ref{TAB}.
\begin{center}
\begin{threeparttable}[b]
\caption{Invariant types of $G'$ and $G/G'$}\label{TAB}
\begin{tabular*}{117mm}[c]{|p{22mm}|p{10mm}|p{24mm}|p{44mm}|}
\toprule
Group                  &$G'$                    & $G/G'$          & Condition\\
\hline
$\mathbf{M}_1(d,e,f)$     & $\mathrm{C}_{p^{e-f}}$ & $\mathrm{C}_{p^f}\times\mathrm{C}_{p^d}$  & $1\leq d\leq f\leq e\leq d+f$ or $1\leq f<d<e\leq d+f$\\
\hline
$\mathbf{M}_2(d,e,f)$     & $\mathrm{C}_{p^{e-f}}$ & $\mathrm{C}_{p^f}\times\mathrm{C}_{p^e}$ & $1\leq f<d<e$\\
\hline
$\mathbf{M}_3(d,e,h,f)$     & $\mathrm{C}_{p^{h-f}}$ & $\mathrm{C}_{p^f}\times\mathrm{C}_{p^{e+d-h}}$  & $h-d\leq f<d<h<e$\\
\bottomrule
\end{tabular*}
\end{threeparttable}
\end{center}
Note that each of the groups has exponent $p^e$ and order $p^{d+e}$. It is easily seen from
the table that the groups from distinct families, or from the same family but with distinct parameters, are not isomorphic.
\end{proof}

\section{Exact bicyclic pairs and automorphisms}
In this section  for the groups $G$ determined in Theorem~\ref{main} we determine the set $\mathcal{P}(G)$
of exact $(p^d,p^e)$-bicyclic pairs $(\alpha,\beta)$ and the set $\Aut(G)$ of automorphisms of $G$.

The following technical result will be useful.
\begin{lemma}\label{TECH}
For the groups given in Theorem~\ref{main}, the following statements hold true:
\begin{itemize}
\item[\rm(i)]The group $G=\mathbf{M}_1(d,e,f)$ is $p^{e-f}$-abelian and $Z(G)=\langle a^{p^{e-f}}\rangle\langle b^{p^{e-f}}\rangle$.
\item[\rm(ii)]The group $G=\mathbf{M}_2(d,e,f)$ is $p^{d-f}$-abelian and $Z(G)=\langle a^{p^{d-f}}\rangle\langle b^{p^{d-f}}\rangle$.
\item[\rm(iii)]The group $G=\mathbf{M}_3(d,e,h,f)$ is $p^{h-f}$-abelian and $Z(G)=\langle a^{p^{h-f}}\rangle\langle b^{p^{h-f}}\rangle$.
\end{itemize}
\end{lemma}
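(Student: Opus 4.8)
The plan is to prove both assertions uniformly for the three families, writing $i_{0}$ for the exponent in question ($i_{0}=e-f$ in case~(i), $i_{0}=d-f$ in case~(ii), $i_{0}=h-f$ in case~(iii)). First I would dispose of the $p^{i_{0}}$-abelianness. Each $G$ is bicyclic with $p>2$, hence metacyclic and, by Lemma~\ref{HUP} together with \cite[Theorem 10.2, III]{Huppert1967}, a regular $p$-group; equivalently, its derived subgroup is cyclic. A one-line computation gives $G'=\langle[a,b]\rangle=\langle a^{p^{f}}\rangle$, and since $a$ has order $p^{e}$, $p^{d}$, $p^{h}$ in cases (i), (ii), (iii) we get $G'\cong\mathrm{C}_{p^{e-f}}$, $\mathrm{C}_{p^{d-f}}$, $\mathrm{C}_{p^{h-f}}$ respectively, so that $\mho_{i_{0}}(G')=1$ in every case. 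As $i_{0}\geq1$ whenever $G$ is non-abelian and $G$ is regular, Lemma~\ref{XU} yields immediately that $G$ is $p^{i_{0}}$-abelian (the borderline abelian group $\mathbf{M}_{1}(d,e,e)$ being trivial).

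For the centre I would argue as follows. Since $G=\langle a,b\rangle$, an element $z$ lies in $Z(G)$ precisely when it commutes with $a$ and with $b$. Write $z=a^{i}b^{j}$ in the normal form relative to the cyclic normal subgroup $\langle a\rangle$ (of order $p^{e}$, $p^{d}$, $p^{h}$), taking $0\le j<p^{d+e-h}$ in case~(iii). Iterating the defining relation $b^{-1}ab=a^{1+p^{f}}$ gives $b^{-j}ab^{j}=a^{(1+p^{f})^{j}}$ and $a^{k}b^{j}=b^{j}a^{k(1+p^{f})^{j}}$, and substituting these into $z^{-1}az$ and $z^{-1}bz$ produces $z^{-1}az=a^{(1+p^{f})^{j}}$ and $z^{-1}bz=b\,a^{-ip^{f}(1+p^{f})^{j}}$. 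By Lemma~\ref{NUM} the multiplicative order of $1+p^{f}$ modulo $p^{k}$ equals $p^{k-f}$ for $f<k$, and $1+p^{f}$ is a unit; hence $z$ commutes with $a$ iff $p^{i_{0}}\mid j$, and $z$ commutes with $b$ iff $p^{i_{0}}\mid i$. Therefore $z\in Z(G)$ iff $a^{i}\in\langle a^{p^{i_{0}}}\rangle$ and $b^{j}\in\langle b^{p^{i_{0}}}\rangle$; since conversely $a^{p^{i_{0}}}$ and $b^{p^{i_{0}}}$ are clearly central, we conclude $Z(G)=\langle a^{p^{i_{0}}}\rangle\langle b^{p^{i_{0}}}\rangle$.

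The computations above are short; the only point needing attention is checking that $i_{0}$ is a legitimate exponent in each family, so that the claimed generators lie in the stated cyclic groups and all reductions modulo $|a|$ make sense. For instance, in case~(iii) one needs $i_{0}=h-f\le d<d+e-h$, which follows from the conditions $1\le f<d<h<e$ and $h\le d+f$ of Theorem~\ref{main}; cases (i) and (ii) are analogous, requiring $i_{0}\le d$ and $i_{0}\le e$ respectively. I expect the only mildly delicate case to be the non-split group $\mathbf{M}_{3}$, where $\langle a\rangle\cap\langle b\rangle\ne1$ forces one to pin down the ranges of $i$ and $j$ before reading off the divisibility conditions; the conjugation computation itself is identical to that in the split families $\mathbf{M}_{1}$ and $\mathbf{M}_{2}$.
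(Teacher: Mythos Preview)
Your proposal is correct and follows essentially the same approach as the paper: for the $p^{i_0}$-abelian assertion you compute $G'=\langle a^{p^f}\rangle$, observe $\mho_{i_0}(G')=1$, and invoke Lemma~\ref{XU} together with regularity, which is precisely what the paper does. For the centre, the paper merely says ``It can be easily verified'' and leaves the other cases to the reader, so your explicit conjugation computation (and your care in case~(iii) with the non-trivial intersection $\langle a\rangle\cap\langle b\rangle$) is a welcome expansion rather than a different route.
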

\begin{proof}
To prove (i), we note that $G'=\langle a^{p^f}\rangle\cong \mathrm{C}_{p^{e-f}}$, so $\mho_{e-f}(G')=1$. Hence
by Lemma~\ref{XU} the group $G$ is $p^{e-f}$-abelian. It can be easily verified that
$Z(G)=\langle g^{p^{e-f}},h^{p^{e-f}}\rangle$. The proof for other cases is similar and is left to the reader.
\end{proof}
\begin{lemma}\label{TRIPLE}
 Let $\alpha=b^ia^j$ and $\beta=b^ka^l$ where $a,b$ are the generators of the group $G$  given in Theorem~\ref{main}, then
 \begin{itemize}
\item[\rm(i)]for $G=\mathbf{M}_1(d,e,f)$, if $(\alpha,\beta)$ is an exact $(p^d,p^e)$-bicyclic pair of $G$,
then either $d=e$ and $p\nmid(il-jk)$, or $d<e$,  $p^{e-d}\mid j$ and $p\nmid il$;
\item[\rm(ii)]for $G=\mathbf{M}_2(d,e,f)$, if $(\alpha,\beta)$ is an exact $(p^d,p^e)$-bicyclic pair of $G$,
then $p^{e-d}\mid i$ and $p\nmid jk$;
\item[\rm(iii)]for $G=\mathbf{M}_3(d,e,h,f)$, if $(\alpha,\beta)$ is an exact $(p^d,p^e)$-bicyclic pair of $G$,
then $p^{e-h}\| i$, $p^{h-d}\mid (j+i/p^{e-h})$ and $p\nmid jk$.
 \end{itemize}
 Conversely, in each case if the numerical conditions are satisfied, then $(\alpha,\beta)$ is an exact
$(p^d,p^e)$-bicyclic pair of $G$.
 \end{lemma}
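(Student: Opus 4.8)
The proof amounts to translating the notion of an exact $(p^d,p^e)$-bicyclic pair into the stated divisibility conditions on the exponents $i,j,k,l$; I would organise it as follows.

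First I would put exactness in a workable form. Since every group $G$ in Theorem~\ref{main} has order $p^{d+e}$, the product formula shows that $(\alpha,\beta)$ is an exact $(p^d,p^e)$-bicyclic pair precisely when $|\alpha|=p^d$, $|\beta|=p^e$ and $\langle\alpha\rangle\cap\langle\beta\rangle=1$: once the two orders are correct, $|\langle\alpha\rangle\langle\beta\rangle|=p^{d+e}/|\langle\alpha\rangle\cap\langle\beta\rangle|$, so the set $\langle\alpha\rangle\langle\beta\rangle$ exhausts $G$ exactly when the intersection is trivial. As $\langle\alpha\rangle$ and $\langle\beta\rangle$ are cyclic, triviality of the intersection is equivalent to the unique subgroups of order $p$ they contain — generated by $\alpha^{p^{d-1}}$ and $\beta^{p^{e-1}}$ — being distinct. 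The task thus reduces to computing, in each family, the order of a general word $\gamma=b^{x}a^{y}$ and the elements $\alpha^{p^{d-1}}$, $\beta^{p^{e-1}}$. Since $\langle\alpha\rangle\cap\langle\beta\rangle=1$ also forces $\langle\alpha,\beta\rangle=G$, it is convenient to record the Burnside-basis form of the latter: in each family $\Phi(G)=\langle a^{p}\rangle\langle b^{p}\rangle$ has index $p^{2}$ and $G/\Phi(G)=\langle\bar a\rangle\times\langle\bar b\rangle\cong\mathrm{C}_{p}\times\mathrm{C}_{p}$, so $\langle\alpha,\beta\rangle=G$ is equivalent to the $2\times2$ matrix of exponents of $\alpha$ and $\beta$ being invertible modulo $p$.

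The order computations rest on the fact that in each of the three families $\langle a\rangle\unlhd G$ with $G/\langle a\rangle$ cyclic, generated by the image of $b$. Hence for $\gamma=b^{x}a^{y}$ the order $p^{t}$ of the image of $\gamma$ in $G/\langle a\rangle$ is immediate, it bounds $|\gamma|$ below, and since $\langle\gamma\rangle$ is cyclic one gets $|\gamma|=p^{t}\,|\gamma^{p^{t}}|$ with $\gamma^{p^{t}}\in\langle a\rangle$. It remains to evaluate $\gamma^{p^{t}}$ as an explicit power of $a$, for which I would use the $p^{c}$-abelian property of Lemma~\ref{TECH} (with $c=e-f,\ d-f,\ h-f$ for $\mathbf{M}_1,\mathbf{M}_2,\mathbf{M}_3$ respectively; the restrictions in Theorem~\ref{main} ensure $c\leq d$) together with the defining relations — in particular $b^{p^{d+e-h}}=a^{p^{d}}$ for $\mathbf{M}_3(d,e,h,f)$, which turns powers of $b$ of high $p$-adic valuation into powers of $a$. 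Carrying this out for $\alpha=b^{i}a^{j}$ and $\beta=b^{k}a^{l}$ converts $|\alpha|=p^{d}$ and $|\beta|=p^{e}$ into the divisibility ("$\mid$") conditions of (i)--(iii); the exact power $p^{e-h}\|i$ demanded in case (iii), rather than just $p^{e-h}\mid i$, is forced because if $p^{e-h}\nmid i$ the image of $\alpha$ in $G/\langle a\rangle$ has order exceeding $p^{d}$, while if $p^{e-h+1}\mid i$ and $p\nmid j$ then $|\alpha|=p^{h}>p^{d}$.

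Finally I would compute $\alpha^{p^{d-1}}$ and $\beta^{p^{e-1}}$ by the same method and compare the order-$p$ subgroups they generate, using the structure of $\langle a\rangle\cap\langle b\rangle$ in each family (trivial for $\mathbf{M}_1$ and $\mathbf{M}_2$, equal to $\langle a^{p^{d}}\rangle$ for $\mathbf{M}_3$): under the divisibility conditions already found, $\beta^{p^{e-1}}$ sits inside one cyclic factor whereas $\alpha^{p^{d-1}}$ acquires a component outside it, so the two minimal subgroups differ — and the non-divisibility conditions such as $p\nmid j$ for $\mathbf{M}_2$ are precisely what forces this separation. Equivalently, after substituting the forced divisibilities the $2\times2$ determinant condition of the first step collapses to $p\nmid(il-jk)$ when $d=e$ in (i), and to $p\nmid il$ or $p\nmid jk$ in the remaining cases. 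Collecting these equivalences yields the forward implication, and the converse follows by reading the same chain backwards. I expect the real difficulty to lie in the precise order bookkeeping for the non-split group $\mathbf{M}_3$ and at the boundary parameter values (such as $e=d+f$ or $h=d+f$), where the $p^{c}$-abelian threshold is as large as the exponent in play and $\gamma^{p^{c}}$ no longer visibly splits as $b^{xp^{c}}a^{yp^{c}}$; there one must fall back on the regularity of $G$ and Lemma~\ref{PROPERTY} — for instance on the identity $(b^{p^{e-h}}a^{-1})^{p^{d}}=1$ in $\mathbf{M}_3$, which holds since $(b^{p^{e-h}})^{p^{d}}=b^{p^{d+e-h}}=a^{p^{d}}$ — to pin the orders down.
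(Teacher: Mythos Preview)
Your proposal is correct and follows essentially the same route as the paper: reduce exactness via the product formula to the three conditions $|\alpha|=p^{d}$, $|\beta|=p^{e}$, $\langle\alpha\rangle\cap\langle\beta\rangle=1$; use Burnside's Basis Theorem for generation; compute $\alpha^{p^{d}}$ via the $p^{c}$-abelianness of Lemma~\ref{TECH} (with $c\leq d$ in every family); and test the intersection by comparing the order-$p$ subgroups $\langle\alpha^{p^{d-1}}\rangle$ and $\langle\beta^{p^{e-1}}\rangle$. One small simplification over your boundary-case worry: to see $\alpha^{p^{d-1}}\notin\langle a\rangle$ you never need $\alpha^{p^{d-1}}$ to split cleanly, since $\langle a\rangle\unlhd G$ already gives $\alpha^{p^{d-1}}\in b^{ip^{d-1}}\langle a\rangle$, and the coset is nontrivial whenever $p\nmid i$ (for $\mathbf{M}_1$) or $p^{e-h}\| i$ (for $\mathbf{M}_3$); the paper uses exactly this coset argument rather than the regularity fallback you sketched.
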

\begin{proof}
By the product formula $G=\langle\alpha\rangle\langle\beta\rangle$
is an exact $(p^d,p^e)$-bicyclic factorisation of $G$ if and only if $|\alpha|=p^d$, $|\beta|=p^e$ and
$\langle \alpha\rangle\cap\langle\beta\rangle=1$. Note that if $(\alpha,\beta)$ is an exact $(p^d,p^e)$-bicyclic
pair of $G$, then by Burnside's Basis Theorem $il-jk\not\equiv0\pmod{p}$.

For $G=\mathbf{M}_1(d,e,f)$, if $d=e$ then the result is obvious; see also \cite{CJSW2009}. Now consider the
case $d<e$. First assume that $(\alpha,\beta)$ is an exact $(p^d,p^e)$-bicyclic pair of $G$, then by
Lemma~\ref{TECH}(i) $G$ is $p^{e-f}$-abelian.
Since $e-f\leq d\leq e $, by Proposition~\ref{XU} we have
\[
1=\alpha^{p^d}=(b^ia^j)^{p^d}=b^{ip^d}a^{jp^d}=a^{jp^d},
\]
so $j\equiv0\pmod{p^{e-d}}$, and hence $il\not\equiv0\pmod{p}$. Conversely, assume the congruences, then
it can
 be easily verified that $|\alpha|=p^d$ and $|\beta|=p^e$. It suffices to prove that $\langle \alpha\rangle\cap\langle \beta\rangle=1$.
 This is equivalent to that $\langle \alpha^{p^{d-1}}\rangle\cap\langle \beta^{p^{e-1}}\rangle=1$. Since
$e-f\leq e-1$,
 by Lemma~\ref{TECH}(i) we have $\beta^{p^{e-1}}=(b^ka^l)^{p^{e-1}}=a^{lp^{e-1}}\in \langle a\rangle$. Since $\alpha^{p^{d-1}}=(b^ia^j)^{p^{d-1}}=b^{ip^{d-1}}a^{js}$ for some integer $s$, we have $\alpha^{p^{d-1}}\notin\langle a\rangle$, so
 $\langle \alpha^{p^{d-1}}\rangle\cap\langle \beta^{p^{e-1}}\rangle=1$.

The proof for the group $G=\mathbf{M}_2(d,e,f)$ is similar and omitted.

For $G=\mathbf{M}_3(d,e,h,f)$, if $G=\langle\alpha\rangle\langle\beta\rangle$
is an exact $(p^d,p^e)$-bicyclic factorisation of $G$, then
$il-jk\not\equiv0\pmod{p}$. By Lemma~\ref{TECH}(iii) $G$ is $p^{h-f}$-abelian. Since $h-f<d$,
we have
\[
1=\alpha^{p^d}=(b^ia^j)^{p^d}=b^{ip^d}a^{jp^d}=b^{p^d(i+jp^{e-h})},
\]
and so $i+jp^{e-h}\equiv0\pmod{p^{e-d}}$. Since $0<e-h<e-d$, we obtain $p^{e-h}\|i$ and
$j+i/p^{e-h}\equiv0\pmod{p^{h-d}}$. Since $il-jk\not\equiv0\pmod{p}$, $jk\not\equiv0\pmod{p}$.
Conversely, assume the numerical conditions, then it is easy to verify that $|\alpha|=p^d$ and $|\beta|=p^e$.
It suffices to prove $\langle \alpha\rangle\cap\langle \beta\rangle=1$, or equivalently $\langle \alpha^{p^{d-1}}\rangle\cap\langle \beta^{p^{e-1}}\rangle=1$. Since $G$ is $p^{h-f}$-abelian and $h-f\leq d<h<e$, we have
\[
\beta^{p^{e-1}}=(b^ka^l)^{p^{e-1}}=b^{kp^{e-1}}a^{lp^{e-1}}=b^{kp^{e-1}}=(b^{d+e-h})^{kp^{h-d-1}}=a^{kp^{h-1}}\in\langle a\rangle.
\]
 Note that $p^{e-h}\|i$. Set $i=i'p^{e-c}$ where $p\nmid i'$. Then
 \[
 \alpha^{p^{d-1}}=(b^ia^j)^{p^{d-1}}\in b^{ip^{d-1}}\langle a\rangle=b^{i'p^{d+e-c-1}}\langle a\rangle.
  \]
  Since $\langle a\rangle\cap\langle b\rangle= \langle b^{p^{d+e-c}}\rangle$, we have
$\alpha^{p^{d-1}}\notin\langle a\rangle$. Thus $\langle \alpha^{p^{d-1}}\rangle\cap\langle \beta^{p^{e-1}}\rangle=1$, as required.
\end{proof}

\begin{corollary}\label{SPAIR1}
\begin{itemize}
\item[\rm(i)]The number of  exact $(p^d,p^e)$-bicyclic pairs of the group $\mathbf{M}_1(d,e,f)$
is equal to $p^{4e-3}(p^2-1)(p-1)$ if $d=e$, and
 $p^{3d+e-2}(p-1)^2$ if $d<e$.
\item[\rm(ii)]The number of  exact $(p^d,p^e)$-bicyclic pairs of the group $\mathbf{M}_2(d,e,f)$ is
 equal to $p^{3d+e-2}(p-1)^2$.
\item[\rm(iii)]The number of  exact $(p^d,p^e)$-bicyclic pairs of the group $\mathbf{M}_3(d,e,h,f)$
is equal to $p^{3d+e-2}(p-1)^2$.
\end{itemize}
\end{corollary}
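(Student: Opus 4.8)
The plan is to read the counts straight off Lemma~\ref{TRIPLE}. First I would fix a normal form for the elements of $G$: writing $\alpha=b^{i}a^{j}$ and $\beta=b^{k}a^{l}$, take $i,k\in\mathbb{Z}_{p^{d}}$, $j,l\in\mathbb{Z}_{p^{e}}$ when $G=\mathbf{M}_{1}(d,e,f)$; $i,k\in\mathbb{Z}_{p^{e}}$, $j,l\in\mathbb{Z}_{p^{d}}$ when $G=\mathbf{M}_{2}(d,e,f)$; and $i,k\in\mathbb{Z}_{p^{d+e-h}}$, $j,l\in\mathbb{Z}_{p^{h}}$ when $G=\mathbf{M}_{3}(d,e,h,f)$. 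In each case the assignment $(i,j)\mapsto b^{i}a^{j}$ is a bijection onto $G$: for $\mathbf{M}_{1}$ and $\mathbf{M}_{2}$ this is immediate from the semidirect-product presentation, and for $\mathbf{M}_{3}$ it follows from $\langle a\rangle\unlhd G$, $|a|=p^{h}$, $|G/\langle a\rangle|=p^{d+e-h}$ and $\langle a\rangle\cap\langle b\rangle=\langle a^{p^{d}}\rangle$, all routine to verify from the presentations in Theorem~\ref{main}. Consequently $|\mathcal{P}(G)|$ equals the number of quadruples $(i,j,k,l)$ in the prescribed ranges satisfying the numerical conditions of Lemma~\ref{TRIPLE}, and the proof is reduced to three elementary counts.

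For $G=\mathbf{M}_{1}(d,e,f)$ with $d=e$ the single condition is $il-jk\not\equiv 0\pmod{p}$, i.e.\ the reduction $\left(\begin{smallmatrix}i&j\\k&l\end{smallmatrix}\right)\bmod p$ lies in $\mathrm{GL}_{2}(\mathbb{F}_{p})$; since $|\mathrm{GL}_{2}(\mathbb{F}_{p})|=(p^{2}-1)(p^{2}-p)$ and each entry has $p^{e-1}$ lifts to $\mathbb{Z}_{p^{e}}$,
\[
|\mathcal{P}(G)|=(p^{2}-1)(p^{2}-p)\,p^{4(e-1)}=p^{4e-3}(p^{2}-1)(p-1).
\]
In every other case the divisibility constraints in Lemma~\ref{TRIPLE} force $p\mid i$ or $p\mid j$, so the Burnside relation $p\nmid(il-jk)$ degenerates to a single non-divisibility condition and one of $k,l$ becomes a free parameter; this degeneration is precisely why the three remaining situations give the same count. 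For $\mathbf{M}_{1}(d,e,f)$ with $d<e$ one tallies $p^{d}$ choices of $j$ (the multiples of $p^{e-d}$ in $\mathbb{Z}_{p^{e}}$), $p^{d-1}(p-1)$ choices of $i$, $p^{e-1}(p-1)$ choices of $l$, and $p^{d}$ free choices of $k$; for $\mathbf{M}_{2}(d,e,f)$ one tallies $p^{d}$ choices of $i$, $p^{d-1}(p-1)$ of $j$, $p^{e-1}(p-1)$ of $k$, and $p^{d}$ free choices of $l$; in both cases the product is $p^{3d+e-2}(p-1)^{2}$.

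Finally, for $\mathbf{M}_{3}(d,e,h,f)$ I would set $i=p^{e-h}i'$ with $p\nmid i'$, giving $p^{d-1}(p-1)$ choices of $i$ in $\mathbb{Z}_{p^{d+e-h}}$; the condition $p^{h-d}\mid(j+i')$ then leaves $p^{d}$ residues for $j$ in $\mathbb{Z}_{p^{h}}$ and, since $h-d\geq 1$ and $p\nmid i'$, already forces $p\nmid j$; then $k$ is constrained only by $p\nmid k$ ($p^{d+e-h-1}(p-1)$ choices) and $l$ is free ($p^{h}$ choices), whence
\[
|\mathcal{P}(G)|=p^{d-1}(p-1)\cdot p^{d}\cdot p^{d+e-h-1}(p-1)\cdot p^{h}=p^{3d+e-2}(p-1)^{2},
\]
the exponent $h$ telescoping away. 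The argument is pure bookkeeping; the only points demanding care are verifying that the normal forms are bijective (so that counting quadruples really counts pairs) and, in case~(iii), observing that the congruence on $j$ already implies $p\nmid j$ --- without this the count of admissible $j$ would be wrong. I foresee no serious obstacle.
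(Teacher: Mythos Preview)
Your proposal is correct and follows essentially the same approach as the paper: both read the counts directly off the numerical conditions in Lemma~\ref{TRIPLE} and multiply the independent choices for $i,j,k,l$. The paper is terser---it states the counts for $\mathbf{M}_1$ explicitly (using $|\mathrm{GL}_2(\mathbb{Z}_{p^e})|$ when $d=e$, and the four-factor product when $d<e$) and then declares the remaining cases ``similar''---so your write-up is in fact a fuller version of the same argument, in particular spelling out case~(iii) and the useful observation that $p^{h-d}\mid(j+i')$ already forces $p\nmid j$.
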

\begin{proof}
 For $G=\mathbf{M}_1(d,e,f)$, by Lemma~\ref{TRIPLE} if $d=e$ then $(b^ia^j,b^ka^l)$ is a $(p^e,p^e)$-bicyclic
 pair of $G$ if and only if the matrix $\begin{pmatrix}i&j\\k&l\end{pmatrix}$ is invertible in the ring
$\mathbb{Z}_{p^e}$, so there are $|\mathrm{GL}(2,p^e)|=p^{4e-3}(p^2-1)(p-1)$ such pairs. On the other hand,
if $d<e$ then $(b^ia^j,b^ka^l)$ is a $(p^d,p^e)$-bicyclic pair of $G$ if and only if $p^{e-d}\mid j$ and
$p\nmid il$, in which case the number of choices for each $i,j,k$ and $l$ is $\phi(p^d)$, $p^d$, $p^d$ and
$\phi(p^e)$, respectively. Multiplying these gives the desired number in (i). The proof for other cases is similar.
\end{proof}
\begin{lemma}\label{CLAIM}
Let $p$ be an odd prime and $n$ a positive integer, then $p^{n-k+2}$ divides ${p^n\choose k}$
for all $3\leq k\leq n+2$.
\end{lemma}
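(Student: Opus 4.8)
The statement is purely arithmetic: $p^{n-k+2}\mid \binom{p^n}{k}$ for $3\le k\le n+2$. The natural tool is the exact $p$-adic valuation of binomial coefficients. I would work with $v_p$, the $p$-adic valuation, and use the classical formula
\[
v_p\!\left(\binom{p^n}{k}\right) = n - v_p(k),
\]
which follows from Kummer's theorem (the number of carries when adding $k$ and $p^n-k$ in base $p$) or, more directly, from the identity $\binom{p^n}{k} = \frac{p^n}{k}\binom{p^n-1}{k-1}$ together with the fact that $\binom{p^n-1}{k-1}$ is a unit times $\binom{p^n-1}{k-1}$ whose valuation is $0$ — more carefully, $v_p\big(\binom{p^n-1}{k-1}\big)=0$ since $p^n-1$ has all base-$p$ digits equal to $p-1$, so no carries occur in the subtraction. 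Hence $v_p\big(\binom{p^n}{k}\big) = n - v_p(k)$ exactly.

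Given this, the inequality to prove reduces to $n - v_p(k) \ge n-k+2$, i.e. $v_p(k) \le k-2$, for all $k$ in the range $3\le k\le n+2$. Since $p$ is odd, for any $k\ge 1$ we have $p^{v_p(k)}\le k$, and in fact the elementary bound $v_p(k)\le \log_p k \le k-2$ holds for all $k\ge 3$ (one checks $p^{k-2}\ge 3^{k-2}\ge k$ for $k\ge 3$ by a trivial induction, the base case $k=3$ giving $3\ge 3$). Actually the cleanest route avoids even this: if $v_p(k)=j$ then $k\ge p^j\ge 3^j$, and $3^j \ge j+2$ for all $j\ge 0$ (base $j=0$: $1\ge 2$ fails, but then $j=0$ forces $v_p(k)=0\le k-2$ directly since $k\ge3$; for $j\ge 1$, $3^j\ge 3\ge j+2$ needs $j\le 1$, so I instead argue $k\ge p^j \ge 2^j+1 \ge j+2$ — I will phrase it as: $k\ge p^{v_p(k)}$ and $p^m\ge m+2$ for all $m\ge 1$ when $p\ge 3$, while $m=0$ is trivial). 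I will pick the tidiest of these equivalent elementary estimates when writing it up.

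The one point deserving care — and the only place the hypothesis $k\le n+2$ is used — is ensuring the exponent $n-k+2$ is not negative and that the valuation formula applies; when $k\le n+2$ we have $n-k+2\ge 0$, so the claim is non-vacuous, and when $k> n+2$ the exponent would be negative and the statement trivial anyway. I would also double-check the edge case $k=n+2$, where we need $v_p\big(\binom{p^n}{n+2}\big)\ge 0$, automatic. The \textbf{main obstacle}, such as it is, is simply deciding whether to invoke Kummer's/Legendre's theorem as a black box or to give the two-line self-contained derivation of $v_p\big(\binom{p^n}{k}\big)=n-v_p(k)$ via $\binom{p^n}{k}=\frac{p^n}{k}\binom{p^n-1}{k-1}$ and the digit argument; I would choose the latter for self-containedness. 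Everything after that is a one-line inequality $v_p(k)\le k-2$.
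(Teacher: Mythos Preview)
Your proposal is correct and follows essentially the same route as the paper: both arguments compute the exact valuation $v_p\!\binom{p^n}{k}=n-v_p(k)$ (the paper via the product $\binom{p^n}{k}=\frac{p^n}{k}\prod_{j=1}^{k-1}\frac{p^n-j}{j}$ with each factor $\frac{p^n-j}{j}$ coprime to $p$, you via the equivalent observation that $\binom{p^n-1}{k-1}$ has valuation $0$), and then reduce to the elementary inequality $k\le p^{k-2}$ for $k\ge 3$, which the paper also proves by induction. The only small point to make explicit is that $k\le n+2\le p^n$ for odd $p$ and $n\ge 1$, so the valuation formula indeed applies in the stated range.
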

\begin{proof}
First it can be easily proved by induction on $k$ that $k\le p^{k-2}$  for all integers $k\geq 3$. Now write $k=p^ek_1$ where $e\geq 0$ and $p\nmid k_1$, and denote the factor $p^e$ of $k$ by $(k)_p.$
   Since
\begin{align*}
{p^n\choose k}&=\frac{p^n(p^n-1)(p^n-2)\cdots [p^n-(k-1)]}{1\times 2\times\cdots\times
(k-1)\times k}=\frac{p^n}{k}\prod_{j=1}^{k-1}\frac{p^n-j}{j},
\end{align*}
and each of the numbers $\frac{p^n-j}{j}$ $(1\leq j\leq k-1)$ is coprime to $p$, we have ${p^n\choose k}_p=\frac{p^n}{(k)_p}$.
Since  $(k)_p\leq k\le p^{k-2}$, we get  $p^{n-k+2}\mid{p^n\choose k}$, as claimed.
\end{proof}
In what follows we determine the automorphisms of the groups in Theorem~\ref{main}.
\begin{lemma}\label{AUTO1}
Let $G=\mathbf{M}_1(d,e,f)$, then the assignment $a\mapsto b^ra^s, b\mapsto b^ta^u$ extends to an automorphism
of $G$ if and only if one of the following cases occur:
\begin{itemize}
\item[\rm(i)]$f=e=d$ and $ru-st\not\equiv0\pmod{p}$.
\item[\rm(ii)]$f<e=d$, $r\equiv0\pmod{p^{e-f}}$, $t\equiv1\pmod{p^{e-f}}$ and $s\not\equiv0\pmod{p}$.
\item[\rm(iii)]$d<f=e$, $u\equiv0\pmod{p^{e-d}}$ and $st\not\equiv0\pmod{p}$.
\item[\rm(iv)]$d\leq f<e$, $u\equiv0\pmod{p^{e-d}}$, $t\equiv1\pmod{p^{e-f}}$ and $s\not\equiv0\pmod{p}$.
\item[\rm(v)]$f<d<e$, $r\equiv0\pmod{p^{d-f}}$, $u\equiv0\pmod{p^{e-d}}$, $t\equiv1\pmod{p^{e-f}}$ and $s\not\equiv0\pmod{p}$.
\end{itemize}
\end{lemma}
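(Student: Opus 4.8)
The plan is to translate the condition ``$a\mapsto b^ra^s$, $b\mapsto b^ta^u$ extends to an endomorphism'' into a system of congruences, and then to observe that for a finite group an endomorphism is an automorphism iff it is surjective, which (since $G$ is a $p$-group) is equivalent to surjectivity modulo the Frattini subgroup $\Phi(G)=G'\mho_1(G)$; by Burnside's Basis Theorem this last condition is just $ru-st\not\equiv 0\pmod p$, which will hold automatically once the other congruences force $s\not\equiv 0$ and $t$ a unit (or the full $\mathrm{GL}_2$ condition in case (i)). So the real content is the endomorphism condition. Writing $\alpha=b^ra^s$ and $\beta=b^ta^u$, the map extends to an endomorphism of $G=\langle a,b\mid a^{p^e}=b^{p^d}=1,\ a^b=a^{1+p^f}\rangle$ precisely when the images satisfy the same three defining relations, i.e.
\[
\alpha^{p^e}=1,\qquad \beta^{p^d}=1,\qquad \alpha^{\beta}=\alpha^{1+p^f}.
\]

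The first two steps are the power computations $\alpha^{p^e}=1$ and $\beta^{p^d}=1$. Here I would invoke Lemma~\ref{TECH}(i): $G$ is $p^{e-f}$-abelian, so for any exponent $\geq e-f$ the $p^{\cdot}$-th power map is a homomorphism and splits across the factors $b$ and $a$. Thus $\alpha^{p^k}=b^{rp^k}a^{s'}$ with $s'$ an explicit multiple of $s$ determined by the conjugation action, and similarly for $\beta$; one reads off $\alpha^{p^e}=1\iff a^{(\text{something})p^e}=1$ together with $b^{rp^e}=1$, etc. The case analysis on the relative sizes of $d,e,f$ enters exactly because $e-f$ may or may not be $\leq d$, $\leq e-1$, and so on, which governs whether these power maps may be applied at the relevant exponents; the five cases (i)--(v) in the statement correspond precisely to the possible orderings $f=e=d$, $f<e=d$, $d<f=e$, $d\le f<e$, $f<d<e$ allowed by the parameter constraint $1\le f\le e\le d+f$ (with $d\le e$) in Theorem~\ref{main}. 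When $e-f$ exceeds the naive threshold one has to fall back on Lemma~\ref{CLAIM} to control the binomial coefficients $\binom{p^n}{k}$ appearing in the expansion of $(xy)^{p^n}$, and this is where the genuine computation lives.

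The third step, the conjugation relation $\beta^{-1}\alpha\beta=\alpha^{1+p^f}$, is the most delicate and I expect it to be the main obstacle. Using $a^b=a^{1+p^f}$ and the fact that $b^t$ acts on $\langle a\rangle$ as $a\mapsto a^{(1+p^f)^t}$ while $a^u$ centralizes $\langle a\rangle$, one computes $\alpha^{\beta}=(b^ra^s)^{b^ta^u}$ and expands; matching the $b$-exponent is free (both sides have $b$-exponent $r$ since $G'\le\langle a\rangle$), and matching the $a$-exponent gives a congruence modulo $p^e$ of the shape
\[
(1+p^f)^t\cdot(\text{unit depending on }r,s)\ \equiv\ (1+p^f)\cdot(\text{same unit})\pmod{p^e},
\]
after carefully accounting for how $b^r$ in $\alpha$ twists the $a$-part. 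Using Lemma~\ref{NUM} — $U(p^e)$ is cyclic and $1+p^f$ generates its subgroup of order $p^{e-f}$ — this collapses to $t\equiv 1\pmod{p^{e-f}}$ in the non-abelian cases, while in case (i) ($f=e=d$) the relation is vacuous and only the $\mathrm{GL}_2$ condition survives. The remaining congruences $r\equiv 0\pmod{p^{d-f}}$ (resp.\ $p^{e-f}$) and $u\equiv 0\pmod{p^{e-d}}$ and $s\not\equiv0\pmod p$ then drop out of the two power relations combined with the requirement that the endomorphism be onto. I would finish by checking, in each of the five cases, that the stated congruences are not merely necessary but also sufficient: given them, verify directly that the three relations hold and that $ru-st$ is a unit mod $p$, so that the endomorphism is surjective mod $\Phi(G)$ and hence an automorphism. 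The bookkeeping across five cases is tedious but each individual verification is a short application of Lemmas~\ref{TECH}, \ref{CLAIM} and~\ref{NUM}.
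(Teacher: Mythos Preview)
Your overall framework matches the paper's: verify the three defining relations $\alpha^{p^e}=1$, $\beta^{p^d}=1$, $\alpha^\beta=\alpha^{1+p^f}$, and use Burnside for surjectivity. But you have misidentified which relation produces which congruence, and this is a real gap rather than a clerical slip.

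You write that ``matching the $b$-exponent is free (both sides have $b$-exponent $r$ since $G'\le\langle a\rangle$)''. This is false. Conjugation does fix the $b$-exponent, so $\alpha^\beta\in b^r\langle a\rangle$; but $\alpha^{1+p^f}=(b^ra^s)^{1+p^f}\in b^{r(1+p^f)}\langle a\rangle$, since $\langle a\rangle\trianglelefteq G$. Comparing and using $\langle a\rangle\cap\langle b\rangle=1$ forces $rp^f\equiv0\pmod{p^d}$, i.e.\ $r\equiv0\pmod{p^{d-f}}$ whenever $f<d$. This is precisely the source of the $r$-condition in cases (ii) and (v), which you instead attribute to the power relations. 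In fact $\alpha^{p^e}=1$ is automatic: $G$ is $p^{e-f}$-abelian with $e-f\le e$, so $\alpha^{p^e}=b^{rp^e}a^{sp^e}=1$ since $|b|=p^d\le p^e$ and $|a|=p^e$. The power relation $\beta^{p^d}=1$ does yield the $u$-condition, but there is no condition on $r$ to be extracted from the power relations alone.

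Once the $r$-condition is in hand, the paper obtains the $t$-condition from the $a$-exponent of the conjugation relation via an explicit computation of $\sigma=\sum_{i=1}^{q}(1+p^f)^{r(i-1)}$; the key estimate $\sigma\equiv q\pmod{p^e}$ follows from $q^r-1\equiv0\pmod{p^d}$ (a consequence of $r\equiv0\pmod{p^{d-f}}$ and Lemma~\ref{NUM}) together with elementary binomial bounds, with no appeal to Lemma~\ref{CLAIM}. Your anticipated use of Lemma~\ref{CLAIM} is therefore unnecessary here; that lemma is only needed later for $\mathbf{M}_3$.
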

\begin{proof}
Set $a_1=b^ra^s$ and $b_1=b^ta^u$. Then the assignment $a\mapsto a_1, b\mapsto b_1$ extends to an automorphism
 of $G$ if and only if  $a_1^{p^e}=b_1^{p^d}=1$, $a_1^{b_1}=a_1^{1+p^f}$ and $G=\langle a_1,b_1\rangle$.

 First assume that $a\mapsto a_1, b\mapsto b_1$ extends to an automorphism
 of $G$. Set  $q:=1+p^f$, then $a_1^{b_1}=a_1^{q}$. Since
\begin{align*}
a_1^{b_1}=(b^ra^s)^{b^ta^u}=(b^r)^{a^u}(a^s)^{b^t}=b^ra^{u(1-q^r)+sq^t}
\end{align*}
and
\[
a_1^{q}=(b^ra^s)^{q}=b^{rq}a^{s\sigma},
\]
where $\sigma=\sum_{i=1}^qq^{r(i-1)}$, we get $b^{r(q-1)}=a^{u(1-q^r)+s(q^t-\sigma)}$.
But $\langle a\rangle\cap\langle b\rangle=1$, thus
\begin{align}
&r(q-1)\equiv0\pmod{p^{d}},\label{NEQ1}\\
&u(q^r-1)\equiv s(q^t-\sigma)\pmod{p^e}.\label{NEQ2}
\end{align}
Note that $G=\langle a_1,b_1\rangle$, so by Burnside's Basis Theorem $ru-st\not\equiv0\pmod{p}$.
To simplify the numerical conditions we distinguish two cases:
\begin{case}[I]$d=e$. We distinguish two subcases:

If $f=d=e$ then $q=1$ and the congruences \eqref{NEQ1} and \eqref{NEQ2} are redundant.

If $f<d=e$ then by \eqref{NEQ1} we have $r\equiv0\pmod{p^{e-f}}$, so $st\not\equiv0\pmod{p}$ and
by Lemma~\ref{NUM} we get $q^r-1=(1+p^f)^r-1\equiv0\pmod{p^e}$. Setting $z=q^r-1$, then
 \begin{align}\label{NEQ123}
\sigma =\frac{q^{rq}-1}{q^r-1}=\frac{1}{z}((1+z)^q-1)={q\choose 1}+\sum_{i=2}^q{q\choose i}z^{i-1}.
 \end{align}
Thus $\sigma\equiv q\pmod{p^e}$ and hence \eqref{NEQ2} is reduced to $sq(q^{t-1}-1)\equiv0\pmod{p^e}$,
which implies that $t\equiv1\pmod{p^{e-f}}$ by Lemma~\ref{NUM}.
\end{case}

\begin{case}[II]$d<e$. Since $|b_1|=p^d$ we have $u\equiv0\pmod{p^{e-d}}$ and $st\not\equiv0\pmod{p}$. In what follows we
distinguish three subcases:

If $d<f=e$ then $q=1$ and so \eqref{NEQ1} and \eqref{NEQ2} are redundant.

If $d\leq f<e$ then $e+f-d\geq e$, so $u(q^r-1)\equiv0\pmod{p^e}$. As before write $z=q^r-1$
and expand $\sigma$ as \eqref{NEQ123}.
Note that $z\equiv0\pmod{p^f}$. Since $2f\geq d+f\geq e$, we have ${q\choose i}z^{i-1}\equiv0\pmod{p^e}$
for all $i\geq 2$, and hence  $\sigma\equiv q\pmod{p^e}$. Therefore \eqref{NEQ2} is reduced to
$sq(q^{t-1}-1)\equiv0\pmod{p^e}$, which implies that $t\equiv1\pmod{p^{e-f}}$.

Finally, if $f<d<e$, then by \eqref{NEQ1} $r\equiv0\pmod{p^{d-f}}$, so by Lemma~\ref{NUM} we get
$q^r-1\equiv0\pmod{p^d}$, and hence $u(q^r-1)\equiv0\pmod{p^e}$. As before write $z=q^r-1$ and expand $\sigma$ as \eqref{NEQ123}.
Then ${q\choose i}z^{i-1}\equiv0\pmod{p^e}$ for all $i\geq 2$, so $\sigma\equiv q\pmod{p^e}$.
Therefore \eqref{NEQ2} is reduced to $sq(q^{t-1}-1)\equiv0\pmod{p^e}$, which implies that $t\equiv1\pmod{p^{e-f}}$.
\end{case}

Conversely, in each case if the numerical conditions are fulfilled, then it is straightforward to verify
that the above assignment extends to an automorphism of $G$, as required.
\end{proof}

\begin{corollary}\label{SAUTO1}
Let $\mathbf{M}_1(d,e,f)$ be the group given by Theorem~\ref{main}. Then
\[
|\Aut(\mathbf{M}_1(d,e,f))|=
\begin{cases}
p^{4e-3}(p^2-1)(p-1),&f=e=d,\\
p^{2(e+f)-1}(p-1),& f<e=d,\\
p^{3d+e-2}(p-1)^2,& d<f=e,\\
p^{3d+f-1}(p-1),&d\leq f<e,\\
p^{2(d+f)-1}(p-1),&f<d<e.
\end{cases}
\]
\end{corollary}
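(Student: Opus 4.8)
The plan is to count directly, for each of the five cases listed in Lemma~\ref{AUTO1}, the number of quadruples $(r,s,t,u)$ satisfying the stated numerical conditions, since by that lemma these quadruples parametrise exactly the automorphisms $a\mapsto b^ra^s$, $b\mapsto b^ta^u$ of $G=\mathbf{M}_1(d,e,f)$. One must keep in mind that $r$ and $t$ are exponents of $b$, so they range over $\mathbb{Z}_{p^d}$, while $s$ and $u$ are exponents of $a$, so they range over $\mathbb{Z}_{p^e}$.

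First I would treat case (i), where $f=e=d$ and $G\cong \mathrm{C}_{p^e}\times\mathrm{C}_{p^e}$ is homocyclic abelian. Here the only constraint is $ru-st\not\equiv0\pmod p$, so $\Aut(G)\cong\mathrm{GL}(2,\mathbb{Z}_{p^e})$ and $|\Aut(G)|=|\mathrm{GL}(2,\mathbb{Z}_{p^e})|=p^{4(e-1)}|\mathrm{GL}(2,p)|=p^{4e-3}(p^2-1)(p-1)$, using that reduction modulo $p$ is onto with kernel of order $p^{4(e-1)}$ and $|\mathrm{GL}(2,p)|=p(p^2-1)(p-1)$.

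For the remaining four cases the conditions of Lemma~\ref{AUTO1} are all of the shape ``$r\equiv 0$ modulo a power of $p$'', ``$u\equiv 0$ modulo a power of $p$'', ``$t\equiv 1$ modulo a power of $p$'' (or simply $p\nmid t$ in case (iii)), and ``$p\nmid s$''. The observation I would record first is that in each of these cases the Burnside basis condition $ru-st\not\equiv 0\pmod p$ is automatically implied: one of $r,u$ is forced to be $\equiv 0\pmod p$ by one of the congruences, while the remaining conditions force $p\nmid st$, whence $ru-st\equiv -st\not\equiv 0\pmod p$. Consequently the number of admissible quadruples factorises as the product of the numbers of admissible values of $r$, $s$, $t$, $u$ taken independently, and these factors are elementary: $p\nmid s$ contributes $\phi(p^e)=p^{e-1}(p-1)$; $p\nmid t$ contributes $\phi(p^d)=p^{d-1}(p-1)$; an unconstrained $r$ or $t$ contributes $p^d$ and an unconstrained $u$ contributes $p^e$; and a congruence $x\equiv c\pmod{p^j}$ with $x$ ranging over $\mathbb{Z}_{p^k}$ contributes $p^{k-j}$.

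The only point needing a little care — and the only genuine bookkeeping subtlety, as there is no conceptual obstacle here — is that $t$ ranges over $\mathbb{Z}_{p^d}$ rather than $\mathbb{Z}_{p^e}$, so the condition $t\equiv 1\pmod{p^{e-f}}$ occurring in cases (ii), (iv) and (v) has $p^{d-(e-f)}$ solutions, which is where the hypothesis $e\le d+f$ (i.e.\ $e-f\le d$) enters; likewise $r\equiv 0\pmod{p^{d-f}}$ in case (v) has $p^f$ solutions. Carrying out the four products then gives $p^f\cdot p^f\cdot p^{e-1}(p-1)\cdot p^e=p^{2(e+f)-1}(p-1)$ in case (ii); $p^d\cdot p^{d-1}(p-1)\cdot p^{e-1}(p-1)\cdot p^d=p^{3d+e-2}(p-1)^2$ in case (iii); $p^d\cdot p^{d-e+f}\cdot p^{e-1}(p-1)\cdot p^d=p^{3d+f-1}(p-1)$ in case (iv); and $p^f\cdot p^{d-e+f}\cdot p^{e-1}(p-1)\cdot p^d=p^{2(d+f)-1}(p-1)$ in case (v), which is exactly the assertion of the corollary.
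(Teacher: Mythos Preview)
Your proof is correct and follows essentially the same approach as the paper: both count the admissible quadruples $(r,s,t,u)$ case by case using Lemma~\ref{AUTO1}, multiplying the independent counts for $r,s,t,u$. Your write-up is in fact slightly more careful, making explicit why the Burnside basis condition $ru-st\not\equiv0\pmod p$ is automatic in cases (ii)--(v) and why the hypothesis $e\le d+f$ guarantees that the congruence $t\equiv 1\pmod{p^{e-f}}$ has $p^{d-(e-f)}$ solutions in $\mathbb{Z}_{p^d}$.
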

\begin{proof}
If $f=e=d$, then by Lemma~\ref{AUTO1}(i) the size of $\Aut(\mathbf{M}_1(e,e,e))$ is
equal to the number of invertible matrices $\begin{pmatrix}
 r&s\\
t&u
\end{pmatrix}
$
with entries in $\mathbb{Z}_{p^e}$, which is $p^{4e-3}(p^2-1)(p-1)$.

In what follows the size of $\Aut(\mathbf{M}_1(d,e,f))$
for the remaining cases is determined by multiplying the numbers of choices for
the parameters $r,s,t$
and $u$. By Lemma~\ref{AUTO1}(ii)--(v), we have the following:

If $f<e=d$,  then the number of choices for each $r,s,t$
and $u$ is $p^f$, $\phi(p^e)$, $p^f$ and $p^e$, respectively.

If $d<f=e$,  then  the number of choices for each $r,s,t$
and $u$ is $p^d $, $\phi(p^e)$, $\phi(p^d)$ and $p^d$, respectively.

If $d\leq f<e$,  then  the number of choices for each $r,s,t$
and $u$ is $p^d$, $\phi(p^e)$, $p^{d+f-e}$ and $p^d$, respectively.

If $f<d<e$,  then the number of choices for each $r,s,t$
and $u$ is $p^f$, $\phi(p^e)$, $p^{d+f-e}$ and $p^d$, respectively.
\end{proof}

\begin{lemma}\label{AUTO2}
 Let $\mathbf{M}_2(d,e,f)$ and $\mathbf{M}_3(d,e,h,f)$ be the groups given by Theorem~\ref{main}, then
 \begin{itemize}
 \item[\rm(i)] the assignment $a\mapsto b^ra^s, b\mapsto b^ta^u$ extends to an automorphism of $\mathbf{M}_2(d,e,f)$
if and only if $r\equiv0\pmod{p^{e-f}}$, $s\not\equiv0\pmod{p}$ and $t\equiv1\pmod{p^{d-f}}$.
\item[\rm(ii)]the assignment $a\mapsto b^ra^s, b\mapsto b^ta^u$ extends to an automorphism of $\mathbf{M}_3(d,e,h,f)$
if and only if $r\equiv0\pmod{p^{d+e-h-f}}$, $s\equiv 1+up^{e-h}\pmod{p^{h-d}}$, $t\equiv1\pmod{p^{d-f}}$ and
$r\equiv(t-1)p^{e-h}\pmod{p^{e-f}}$.
\end{itemize}
\end{lemma}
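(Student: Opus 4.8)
The plan is to treat the two groups in parallel, mimicking the argument of Lemma~\ref{AUTO1}. Write $a_1=b^ra^s$, $b_1=b^ta^u$ and $q=1+p^f$. The assignment $a\mapsto a_1$, $b\mapsto b_1$ extends to an automorphism precisely when $a_1,b_1$ satisfy the defining relations of the group in question and generate it. By Burnside's Basis Theorem the generation condition is equivalent to $ru-st\not\equiv 0\pmod p$; as in Lemma~\ref{AUTO1}, this will turn out to be automatic once the relation-preserving congruences are imposed, so the real content is the translation of the relations into numerical conditions. The key technical tools are Lemma~\ref{TECH} (the $p^{d-f}$-abelianness of $\mathbf{M}_2$ and the $p^{h-f}$-abelianness of $\mathbf{M}_3$, together with the centre), Lemma~\ref{NUM} (description of the subgroups of $U(p^n)$), and Lemma~\ref{CLAIM} together with the expansion $\sigma=\frac1z((1+z)^{q}-1)=\binom{q}{1}+\sum_{i\geq 2}\binom{q}{i}z^{i-1}$ of the conjugation-sum, exactly as in \eqref{NEQ123}.

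For part (i), I would first impose $a_1^{p^d}=1$: using $p^{d-f}$-abelianness, $a_1^{p^d}=(b^ra^s)^{p^d}=b^{rp^d}a^{sp^d}=b^{rp^d}$ in $\mathbf{M}_2$ (since $|a|=p^d$), and since $|b|=p^e$ this forces $r\equiv 0\pmod{p^{e-d}}$; combined with the conjugation relation below it sharpens to $r\equiv 0\pmod{p^{e-f}}$. Then $b_1^{p^e}=1$ is automatic. The relation $a_1^{b_1}=a_1^{q}$ is handled as in Lemma~\ref{AUTO1}: compute $a_1^{b_1}=b^{r}a^{u(1-q^{r})+sq^{t}}$ and $a_1^{q}=b^{rq}a^{s\sigma}$; since $\langle a\rangle\cap\langle b\rangle=1$ this splits into $r(q-1)\equiv 0\pmod{p^e}$ (forcing $r\equiv0\pmod{p^{e-f}}$ via Lemma~\ref{NUM}) and $u(q^{r}-1)\equiv s(q^{t}-\sigma)\pmod{p^d}$. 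Once $r\equiv0\pmod{p^{e-f}}$, one has $z=q^{r}-1\equiv0\pmod{p^{e}}$, so $u(q^r-1)\equiv0\pmod{p^d}$ and, using Lemma~\ref{CLAIM} to kill the higher terms of $\sigma$, $\sigma\equiv q\pmod{p^d}$; the displayed congruence reduces to $sq(q^{t-1}-1)\equiv 0\pmod{p^d}$, i.e.\ $t\equiv 1\pmod{p^{d-f}}$ by Lemma~\ref{NUM}, and $s\not\equiv0\pmod p$ drops out of Burnside. Conversely one checks these suffice.

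For part (ii) the same scheme applies but the bookkeeping is heavier because of the non-split relation $b^{p^{d+e-h}}=a^{p^d}$. Using $p^{h-f}$-abelianness in $\mathbf{M}_3$, I would rewrite powers of $a_1=b^ra^s$ and $b_1=b^ta^u$ modulo $\langle a\rangle$ via $b^{p^{d+e-h}}=a^{p^d}$, extract the two relations $a_1^{p^{h}}=1$ and $b_1^{p^{d+e-h}}=a_1^{p^{d}}$ and the conjugation relation $a_1^{b_1}=a_1^{q}$, and match exponents separately in the $\langle b\rangle/\langle a\rangle$-part and in $\langle a\rangle$. The $\langle b\rangle$-part of the order and power conditions will produce $r\equiv 0\pmod{p^{d+e-h-f}}$ and $t\equiv1\pmod{p^{d-f}}$; the $\langle a\rangle$-part of $a_1^{b_1}=a_1^{q}$, after substituting $\sigma\equiv q$ (again via Lemma~\ref{CLAIM}, valid since $r$ is highly divisible) and $b^{p^{d+e-h}}=a^{p^d}$, will give the two coupled congruences $s\equiv 1+up^{e-h}\pmod{p^{h-d}}$ and $r\equiv (t-1)p^{e-h}\pmod{p^{e-f}}$. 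The main obstacle I anticipate is precisely this last step: carefully tracking how the relation $b^{p^{d+e-h}}=a^{p^d}$ mixes the $a$- and $b$-exponents when one raises $a_1$ and $b_1$ to $p$-power exponents, and verifying that no congruence is lost or over-imposed, so that the list is both necessary and sufficient. As before, the converse direction is a direct substitution check that the four stated congruences make $a_1,b_1$ satisfy the presentation and (via $ru-st\not\equiv0\pmod p$) generate $G$.
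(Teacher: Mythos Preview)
Your approach is essentially the paper's: translate the defining relations of $G$ into congruences on $r,s,t,u$ using the $p$-abelianness from Lemma~\ref{TECH}, the $\sigma$-expansion \eqref{NEQ123}, and Lemma~\ref{NUM}. Part~(i) is correct as written; your explicit formulas $a_1^{b_1}=b^{r}a^{u(1-q^{r})+sq^{t}}$ and $a_1^{q}=b^{rq}a^{s\sigma}$ are exactly what the paper uses (the paper shortcuts slightly by observing $b^{r}\in Z(G)$ once $r\equiv0\pmod{p^{e-f}}$, but the outcome is identical).

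In part~(ii) your bookkeeping of which relation produces which congruence is off, and since you flag this as the delicate step it is worth correcting. The order relation $a_1^{p^{h}}=1$ only yields $r\equiv0\pmod{p^{e-h}}$, and the $b$-part of the power relation $b_1^{p^{d+e-h}}=a_1^{p^{d}}$ is vacuous in normal form (both sides land in $\langle a\rangle$). The sharper congruences $r\equiv0\pmod{p^{d+e-h-f}}$ and $t\equiv1\pmod{p^{d-f}}$ both come from the conjugation relation $a_1^{b_1}=a_1^{q}$: the first from equating $b$-exponents modulo $p^{d+e-h}$ (the paper's \eqref{REL2}), the second from equating $a$-exponents modulo $p^{d}$ (the paper's \eqref{REL3}, reduced to \eqref{REL5}). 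Conversely, the two coupled congruences $s\equiv1+up^{e-h}\pmod{p^{h-d}}$ and $r\equiv(t-1)p^{e-h}\pmod{p^{e-f}}$ do \emph{not} follow from the $a$-part of the conjugation relation alone: one must combine the $a$-part of $a_1^{b_1}=a_1^{q}$ taken modulo $p^{h}$ (the paper's \eqref{REL4}, reduced to \eqref{REL7}) with the $a$-part of the power relation $b_1^{p^{d+e-h}}=a_1^{p^{d}}$ (the paper's \eqref{REL8}). Only after writing $r=r_1p^{d+e-h-f}$, $t=1+t_1p^{d-f}$ and eliminating between these two equations does one obtain $r_1\equiv t_1\pmod{p^{h-d}}$ and then $s\equiv1+up^{e-h}\pmod{p^{h-d}}$. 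So the power relation is not a source of the ``simple'' congruences but is indispensable for the coupled ones; with that correction your plan matches the paper.
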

\begin{proof}Denote $a_1=b^ra^s$ and $b_1=b^ta^u$, and write $q=1+p^f$.

(i)~First assume that the assignment $a\mapsto a_1,b\mapsto b_1$ extends to an automorphism of $G=\mathbf{M}_2(d,e,f)$,
then in terms of $a_1$ and $b_1$ the group $G$ has the presentation
 \[
 G=\langle a_1,b_1\mid a_1^{p^d}=b_1^{p^e}=1, a_1^{b_1}=a_1^{q}\rangle.
 \]
By Lemma~\ref{TECH}(ii) the group $G$ is $p^{d-f}$-abelian, so $ 1=a_1^{p^d}=b^{rp^d}a^{sp^d}=a^{sp^d}$,
and hence $r\equiv0\pmod{p^{e-d}}$.  By Burnside's Basis Theorem, $ru-st\not\equiv0\pmod{p}$, so
$st\not\equiv0\pmod{p}$. Moreover,
\begin{align*}
a_1^{b_1}&=(b^ra^s)^{b^ta^u}=(b^r)^{a^u}(a^s)^{b^t}=b^r[b^r,a^u](a^s)^{b^t}\in b^r\langle a\rangle
\end{align*}
and
\[
a_1^{q}=(b^ra^s)^{q}\in b^{rq}\langle a\rangle,
\]
so from the relation $a_1^{b_1}=a_1^{q}$ and the fact $\langle a\rangle\cap\langle b\rangle=1$
we deduce that $b^{rq}=b^r$. Thus $r(q-1)\equiv0\pmod{p^e}$, and hence $r\equiv0\pmod{p^{e-f}}$. By Lemma~\ref{TECH}(ii),
  $b^r\in Z(G)$, so $b^ra^{sq^t}=b_1^{a_1}=(a_1)^q=b^{rq}a^{sq},$ and hence $b^{r(q-1)}=a^{sq(q^{t-1}-1)}$.
  Since $\langle a\rangle\cap\langle b\rangle=1$, we get $sq(q^{t-1}-1)\equiv0\pmod{p^d}$.
  Therefore $t\equiv1\pmod{p^{d-f}}$.

  Conversely, if the numerical conditions are fulfilled,
  then it is straightforward to verify that the assignment extends to an automorphism of $G$.

(ii)~First assume that the assignment $a\mapsto a_1, b\mapsto b_1$ extends to
 an automorphism of $G=\mathbf{M}_3(d,e,h,f)$, then
 \[
 G=\langle a_1,b_1\mid a_1^{p^h}=1, b_1^{p^{d+e-h}}=a_1^{p^d}, a_1^{b_1}=a_1^{q}\rangle,
 \]
 where $q:=1+p^f$. By Lemma~\ref{TECH}(iii)
$G$ is $p^{h-f}$-abelian. Since $h-f\leq d<h$, we have
\[
1=a_1^{p^h}=(b^ra^s)^{p^h}=b^{rp^h}a^{sp^h}=b^{rp^h},
\]
so $r\equiv0\pmod{p^{e-h}}$. Since $ru-st\not\equiv0\pmod{p}$, we obtain $st\not\equiv0\pmod{p}$.

Moreover, we have
\begin{align*}
&a_1^{b_1}=(b^ra^s)^{b^ta^u}=(b^r)^{a^u}(a^s)^{b^t}=b^ra^{u(1-q^r)+sq^t},\\
&a_1^q=(b^ra^s)^q=b^{rq}a^{s\sigma},
\end{align*}
 where $\sigma=\sum_{i=1}^qq^{r(i-1)}$, so from the relation $b_1^{p^{d+e-h}}=a_1^{p^d}$ we deduce that
 \begin{align}\label{REL1}
b^{r(q-1)}=a^{u(1-q^r)+s(q^t-\sigma)}.
  \end{align}
Since $\langle a\rangle\cap\langle b\rangle=\langle b^{p^{d+e-h}}\rangle=\langle a^{p^d}\rangle$, we get
\begin{align}
r(q-1)&\equiv0\pmod{p^{d+e-h}},\label{REL2}\\
 u(q^r-1)&\equiv s(q^t-\sigma)\pmod{p^d}.\label{REL3}
 \end{align}
Upon substitution \eqref{REL1} is transformed to
\[
a^{u(1-q^r)+s(q^t-\sigma)}=b^{r(q-1)}=(b^{p^{d+e-h}})^{rp^{f+h-e-d}}=a^{rp^{f+h-e}},
\]
which implies that
\begin{align}\label{REL4}
u(1-q^r)+s(q^t-\sigma)\equiv rp^{f+h-e}\pmod{p^h}.
\end{align}
 By \eqref{REL2} we get $r\equiv0\pmod{p^{d+e-h-f}}.$ Setting $z=q^r-1$, then
 \[
 z=(1+p^f)^r-1={r\choose1}p^f+\sum_{i=2}^r{r\choose i}p^{if}.
 \]
 For all $i\geq 2$ by Lemma~\ref{CLAIM}  we have $p^{d+e-h-f-i+2+if}\mid {r\choose i}p^{if}$; since
 $ d+e-h-f-i+2+if\geq d-f-(i-2)+if=d+f+(i-2)(f-2)\geq d+f\geq h,$
 we get ${r\choose i}p^{if}\equiv0\pmod{p^h}$, and so $z=q^r-1\equiv rp^f\pmod{p^h}$. It follows that
 \begin{align*}
 \sigma=\frac{q^{rq}-1}{q^r-1}=\frac{1}{z}((1+z)^q-1)={q\choose1}+{q\choose2}z+\sum_{i=3}^q{q\choose i}z^{i-1}\equiv q\pmod{p^h}
 \end{align*}
 and so \eqref{REL3} and  \eqref{REL4} are reduced to
\begin{align}
sq(q^{t-1}-1)&\equiv0\pmod{p^d},\label{REL5}\\
sq(q^{t-1}-1)&\equiv rp^{f+h-e}(1+up^{e-h})\pmod{p^h}\label{REL6}.
\end{align}
By Lemma~\ref{NUM} we deduce from \eqref{REL5} that $t\equiv1\pmod{p^{d-f}}$. Hence
\[
q^{t-1}-1=(1+p^f)^{t-1}-1={t-1\choose 1}p^f+\sum_{i=2}^{t-1}{t-1\choose i}p^{if}\equiv (t-1)p^f\pmod{p^h},
\]
and consequently, \eqref{REL6} is reduced to
\begin{align}\label{REL7}
sq(t-1)p^f\equiv rp^{f+h-e}(1+up^{e-h})\pmod{p^h}.
\end{align}

We proceed to consider the relation $b_1^{p^{d+e-h}}=a_1^{p^d}$. By Lemma~\ref{TECH}(iii) $G$ is $p^{h-f}$-abelian.
Since  $h-f\leq d\leq d+e-h$, we have
$b_1^{p^{d+e-h}}=(b^ta^u)^{p^{d+e-h}}=b^{tp^{d+e-h}}a^{up^{d+e-h}} $ and $a_1^{p^d}=(b^ra^s)^{p^d}=b^{rp^d}a^{sp^d}$,
and so
\[
a^{(s-up^{e-h})p^d}=b^{tp^{d+e-h}-rp^d}=(b^{p^{d+e-h}})^{t-rp^{h-e}}=a^{(t-rp^{h-e})p^d}.
\]
Thus
\begin{align}\label{REL8}
(s-up^{e-h})p^d\equiv (t-rp^{h-e})p^d\pmod{p^h}.
\end{align}
Write $r=r_1p^{d+e-h-f}$ and $t=1+t_1p^{d-f}.$ Recall $q=1+p^f$ and $h-d\leq f$. Then \eqref{REL7} and \eqref{REL8} are reduced to
\begin{align}
&st_1\equiv r_1(1+up^{e-h})\pmod{p^{h-d}},\label{REL9}\\
&s\equiv 1+up^{e-h}+(t_1-r_1)p^{d-f}\pmod{p^{h-d}}.\label{REL10}
\end{align}
Substituting $1+up^{e-h}+(t_1-r_1)p^{d-f}$ for $s$ in \eqref{REL9} we obtain $(t_1-r_1)(1+up^{e-h}+p^{d-f})\equiv0\pmod{p^{h-d}}$,
thus $r_1\equiv t_1\pmod{p^{h-d}}$ (or equivalently, $r\equiv (t-1)p^{e-h}\pmod{p^{e-f}}$). Therefore \eqref{REL10} is reduced to $s\equiv 1+up^{e-h}\pmod{p^{h-d}}$.

Conversely, if the numerical conditions are fulfilled, then it is straightforward to verify that the above assignment extends to
an automorphism of $G$, as required.
\end{proof}

\begin{corollary}\label{SAUTO2}
Let $\mathbf{M}_2(d,e,f)$ and $\mathbf{M}_3(d,e,h,f)$ be the groups given by Theorem~\ref{main}. Then
\begin{itemize}
\item[\rm(i)]$|\Aut(\mathbf{M}_2(d,e,f))|=p^{d+e+2f-1}(p-1)$.
\item[\rm(ii)] $|\Aut(\mathbf{M}_3(d,e,h,f))|=p^{2d+e+2f-h}$.
\end{itemize}
\end{corollary}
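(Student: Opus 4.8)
The plan is to count, in each of the two families, the quadruples $(r,s,t,u)$ of residues that satisfy the characterisation of automorphisms obtained in Lemma~\ref{AUTO2}, and then to observe that each admissible quadruple determines a distinct automorphism. The latter point is immediate: $G$ is generated by $a,b$, so an endomorphism is determined by the images $a\mapsto b^r a^s$, $b\mapsto b^t a^u$, and two distinct admissible quadruples give distinct images (the exponents of $a$ and $b$ are read off from the coset structure $G=\langle a\rangle\langle b\rangle$, using $\langle a\rangle\cap\langle b\rangle=1$ in the $\mathbf{M}_2$ case and $\langle a\rangle\cap\langle b\rangle=\langle a^{p^d}\rangle$ in the $\mathbf{M}_3$ case). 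Hence $|\Aut(G)|$ equals the number of admissible quadruples, and the task is purely arithmetical: count solutions of the congruence systems in Lemma~\ref{AUTO2}(i) and (ii) over the appropriate cyclic residue rings.

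For (i), the group $\mathbf{M}_2(d,e,f)$ has $a$ of order $p^d$ and $b$ of order $p^e$, so $s,u$ range over $\mathbb{Z}_{p^d}$ and $r,t$ over $\mathbb{Z}_{p^e}$. Lemma~\ref{AUTO2}(i) imposes: $r\equiv0\pmod{p^{e-f}}$, giving $p^f$ choices for $r$; $s\not\equiv0\pmod p$, giving $\phi(p^d)=p^{d-1}(p-1)$ choices for $s$; $t\equiv1\pmod{p^{d-f}}$ as a congruence on $t\in\mathbb{Z}_{p^e}$, giving $p^{e-(d-f)}=p^{e-d+f}$ choices for $t$; and $u$ is free in $\mathbb{Z}_{p^d}$, giving $p^d$ choices. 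Multiplying, $p^f\cdot p^{d-1}(p-1)\cdot p^{e-d+f}\cdot p^d=p^{d+e+2f-1}(p-1)$, as claimed. (One should double-check that the natural ranges of the exponents are indeed $\mathbb{Z}_{|a|}$ and $\mathbb{Z}_{|b|}$, i.e.\ that the conditions $e-f\le e$, $d-f\le e$ hold so the congruences make sense; these follow from $1\le f<d<e$.)

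For (ii), the group $\mathbf{M}_3(d,e,h,f)$ has $a$ of order $p^h$ and $b$ of order $p^{d+e-h}$ (since $b^{p^{d+e-h}}=a^{p^d}$ is nontrivial only implicitly — more precisely $|b|=p^{d+e-h}$ as an element whose $p^{d+e-h}$-th power equals $a^{p^d}$... one takes the exponent of $b$ modulo $p^{d+e-h}$ and the exponent of $a$ modulo $p^h$). So $r,t\in\mathbb{Z}_{p^{d+e-h}}$ and $s,u\in\mathbb{Z}_{p^h}$. Lemma~\ref{AUTO2}(ii) gives four conditions; I would count by choosing $t$ and $u$ freely subject to the conditions that involve only them, then solving for $r$ and $s$. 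Concretely: $t\equiv1\pmod{p^{d-f}}$ in $\mathbb{Z}_{p^{d+e-h}}$ gives $p^{d+e-h-(d-f)}=p^{e-h+f}$ choices; the two conditions on $r$, namely $r\equiv0\pmod{p^{d+e-h-f}}$ and $r\equiv(t-1)p^{e-h}\pmod{p^{e-f}}$, are compatible (because $t-1\equiv0\pmod{p^{d-f}}$ forces $(t-1)p^{e-h}\equiv0\pmod{p^{d+e-h-f}}$) and together pin down $r$ modulo $p^{e-f}$, leaving $p^{(d+e-h)-(e-f)}=p^{d-h+f}$ choices; $s\equiv1+up^{e-h}\pmod{p^{h-d}}$ in $\mathbb{Z}_{p^h}$ gives $p^{h-(h-d)}=p^d$ choices once $u$ is fixed; and $u$ is free in $\mathbb{Z}_{p^h}$, giving $p^h$ choices. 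The product is $p^{e-h+f}\cdot p^{d-h+f}\cdot p^d\cdot p^h=p^{2d+e+2f-h}$, as claimed.

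The main obstacle is bookkeeping rather than ideas: one must be careful that each congruence is interpreted in the correct residue ring (modulus $|a|=p^h$ or $|b|=p^{d+e-h}$ in the $\mathbf{M}_3$ case), that the side conditions $h-d\le f<d<h<e$ and $1\le f<d<e$ make every displayed exponent of $p$ nonnegative, and — crucially for (ii) — that the pair of conditions on $r$ is consistent and correctly counted as a single congruence modulo $p^{e-f}$ rather than two independent ones; skipping the compatibility check would give a wrong power of $p$. Once the ranges and the one dependency among the $r$-conditions are handled, the count is a direct product of the individual solution counts.
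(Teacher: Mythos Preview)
Your proposal is correct and follows essentially the same approach as the paper: both count admissible quadruples $(r,s,t,u)$ directly from the characterisation in Lemma~\ref{AUTO2}, with the only cosmetic difference in part~(ii) being that you fix $t$ first and then count the compatible $r$, whereas the paper parametrises $r=r_1p^{d+e-h-f}$, $t=1+t_1p^{d-f}$ and fixes $r_1$ first before counting $t_1$. One small slip to clean up: in the $\mathbf{M}_3$ case the element $b$ has order $p^e$, not $p^{d+e-h}$; what is true (and what you ultimately use) is that every group element has a unique expression $b^ia^j$ with $i\in\mathbb{Z}_{p^{d+e-h}}$ and $j\in\mathbb{Z}_{p^h}$, since $\langle a\rangle\trianglelefteq G$ and $|G:\langle a\rangle|=p^{d+e-h}$.
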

\begin{proof}
(i)~By Lemma~\ref{AUTO2}(i), the number of choices for each $r,s,t$ and $u$ is $p^f$, $p^{d-1}(p-1)$, $p^{e+f-d}$
and $p^d$, respectively, and multiplying these gives the desired number.

(ii)~Note that $r,t\in\mathbb{Z}_{p^{d+e-h}}$ and $s,u\in\mathbb{Z}_{p^h}$.
By Lemma~\ref{AUTO2}(ii), we may write $r=r_1p^{d+e-h-f}$ and $t=1+t_1p^{d-f}$ where
$r_1\in\mathbb{Z}_{p^f}$ and $t_1\in\mathbb{Z}_{p^{e+f-h}}$, so the
congruence $r\equiv(t-1)p^{e-h}\pmod{p^{e-f}}$ is reduced to $r_1\equiv t_1\pmod{p^{h-d}}$.
Thus for each
$r_1\in\mathbb{Z}_{p^f}$ the number of choices for $t_1\in\mathbb{Z}_{p^{e+f-h}}$ such that
$r_1\equiv t_1\pmod{p^{h-d}}$ is equal to $p^{d+e+f-2h}$, and for each $u\in\mathbb{Z}_{p^h}$ the
number of choices for $s\in\mathbb{Z}_{p^h}$ such that $s\equiv 1+up^{e-h}\pmod{p^{h-d}}$ is equal
to $p^d$. Consequently, the desired number is the product $p^fp^{d+e+f-2h}p^hp^{d}=p^{2d+e+2f-h}.$
\end{proof}

\section{Enumeration}
In this section we calculate the number of isomorphism classes of reciprocal pairs of $(p^d,p^e)$-complete regular dessins.

The following result deals with the particular case $d=e$ where symmetric dessins may appear.
\begin{lemma}\label{ENM0}
For each $e\geq1$, up to isomorphism there are $p^{2(e-1)}$ regular dessins with underlying graphs $K_{p^e,p^e}$,
of which the number of  symmetric ones is $p^{e-1}$.
\end{lemma}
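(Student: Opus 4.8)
The plan is to sort the dessins by the isomorphism type of their automorphism group. Since here $d=e$, Theorem~\ref{main} (and its proof) tells us that the automorphism group of a $(p^e,p^e)$-complete regular dessin is one of the pairwise non-isomorphic groups $G_f:=\mathbf{M}_1(e,e,f)$ with $1\le f\le e$ (the families $\mathbf{M}_2$ and $\mathbf{M}_3$ cannot occur, as they require $d<e$), where $G_e\cong\mathrm{C}_{p^e}\times\mathrm{C}_{p^e}$ is abelian and $G_f$ with $f<e$ is non-abelian. Hence, by Proposition~\ref{PROP2}(i), the number of regular dessins with underlying graph $K_{p^e,p^e}$ equals $\sum_{f=1}^{e}\nu(G_f)$ and the number of symmetric ones equals $\sum_{f=1}^{e}\nu_0(G_f)$, where $\nu(G_f)=|\mathcal{P}(G_f)|/|\Aut(G_f)|$ and $\nu_0(G_f)=|\mathcal{B}(G_f)|/|\Aut(G_f)|$; so it suffices to evaluate these quantities and sum.

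For the total, Corollary~\ref{SPAIR1}(i) gives $|\mathcal{P}(G_f)|=p^{4e-3}(p^2-1)(p-1)$ for every $f$ (this is independent of $f$, as is already visible in Lemma~\ref{TRIPLE}(i): when $d=e$ the only constraint on $(b^ia^j,b^ka^l)$ is $p\nmid(il-jk)$), while Corollary~\ref{SAUTO1} gives $|\Aut(G_e)|=p^{4e-3}(p^2-1)(p-1)$ and $|\Aut(G_f)|=p^{2(e+f)-1}(p-1)$ for $f<e$. Thus $\nu(G_e)=1$ and $\nu(G_f)=p^{2(e-f-1)}(p^2-1)$ for $f<e$, so
\[
\sum_{f=1}^{e}\nu(G_f)=1+(p^2-1)\sum_{j=0}^{e-2}p^{2j}=1+\bigl(p^{2(e-1)}-1\bigr)=p^{2(e-1)},
\]
which is the first assertion.

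For the symmetric count I would first dispose of $G_e$: writing elements of $G_e\cong\mathrm{C}_{p^e}^2$ as their exponent vectors, $\Aut(G_e)=\mathrm{GL}(2,\mathbb{Z}_{p^e})$ acts linearly, and for any exact bicyclic pair $(\alpha,\beta)$ the exponent vectors of $\alpha$ and $\beta$ form a basis of $\mathbb{Z}_{p^e}^2$, so the unique linear map transposing them is invertible and hence an automorphism; therefore every exact bicyclic pair of $G_e$ is isobicyclic and $\nu_0(G_e)=\nu(G_e)=1$. For $f<e$ I would combine Lemma~\ref{TRIPLE}(i) with Lemma~\ref{AUTO1}(ii): write $\alpha=b^ia^j$, $\beta=b^ka^l$ with $p\nmid(il-jk)$, and take an automorphism $\sigma$ in the form $a\mapsto b^ra^s$, $b\mapsto b^ta^u$ with $p^{e-f}\mid r$, $t\equiv1\pmod{p^{e-f}}$ and $s\not\equiv0\pmod p$. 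Using the $p^{e-f}$-abelianness of $G_f$ (Lemma~\ref{TECH}(i)) and the congruence $(1+p^f)^r\equiv1\pmod{p^e}$ (Lemma~\ref{NUM}), one reduces $\sigma(\alpha),\sigma(\beta)$ to normal form, obtaining $\sigma(\alpha)=b^{ti+rj}a^{uS_i+sj}$ and $\sigma(\beta)=b^{tk+rl}a^{uS_k+sl}$ with $S_i\equiv\bigl((1+p^f)^i-1\bigr)/p^f$ and $S_k\equiv\bigl((1+p^f)^k-1\bigr)/p^f$ modulo $p^e$. The condition $\sigma(\alpha)=\beta$, $\sigma(\beta)=\alpha$ then becomes a system of four congruences modulo $p^e$ in $(r,s,t,u)$, and analysing when it is solvable subject to the above constraints should show that $(\alpha,\beta)$ is isobicyclic for exactly $|\mathcal{B}(G_f)|=p^{3e+f-2}(p-1)^2$ of the admissible quadruples $(i,j,k,l)$, whence $\nu_0(G_f)=p^{e-f-1}(p-1)$. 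Summing then yields
\[
\sum_{f=1}^{e}\nu_0(G_f)=1+(p-1)\sum_{j=0}^{e-2}p^{j}=1+\bigl(p^{e-1}-1\bigr)=p^{e-1},
\]
which is the second assertion.

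The main obstacle is the last step: carrying out the normal-form expansion of $\sigma(\alpha),\sigma(\beta)$ and then, for each admissible quadruple $(i,j,k,l)$, deciding whether the resulting four congruences admit a solution $(r,s,t,u)$ with $p^{e-f}\mid r$, $t\equiv1\pmod{p^{e-f}}$ and $s\not\equiv0\pmod p$, so as to confirm that the isobicyclic quadruples number exactly $p^{3e+f-2}(p-1)^2$. Everything else is routine bookkeeping with Corollaries~\ref{SPAIR1} and~\ref{SAUTO1}, the short linear-algebra argument for $G_e$, and the two geometric-series identities.
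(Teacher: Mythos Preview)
Your treatment of the total count is essentially the paper's own argument: you invoke Theorem~\ref{main} to restrict to $G_f=\mathbf{M}_1(e,e,f)$, read off $|\mathcal{P}(G_f)|$ and $|\Aut(G_f)|$ from Corollaries~\ref{SPAIR1} and~\ref{SAUTO1}, and sum the resulting geometric series to get $p^{2(e-1)}$. There is nothing to add here.

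For the symmetric count, however, the paper does \emph{not} carry out the computation you propose: it simply cites \cite[Theorem~1]{JNS2008}, where the classification of symmetric $(p^e,p^e)$-complete regular dessins (equivalently, regular embeddings of $K_{p^e,p^e}$) was already established and the number $p^{e-1}$ obtained. So your route is genuinely different, and also genuinely incomplete. The abelian case $G_e$ is fine---your linear-algebra argument is correct---but for $f<e$ you have only set up the problem: you write down the shape of $\sigma(\alpha)$ and $\sigma(\beta)$, note that the isobicyclic condition becomes a system of four congruences in $(r,s,t,u)$ subject to the constraints of Lemma~\ref{AUTO1}(ii), and then assert that ``analysing when it is solvable\ldots should show'' the desired count. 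That analysis is the entire content of the claim $\nu_0(G_f)=p^{e-f-1}(p-1)$, and you have not done it. (Incidentally, your normal form for $\sigma(\alpha)$ is not quite right: expanding $(b^ta^u)^i(b^ra^s)^j$ in $G_f$, the $a$-exponent involves $u\sum_{k=0}^{i-1}q^{tk}$ with $q=1+p^f$, not simply $uS_i$ with $S_i=(q^i-1)/p^f$; one needs to use $t\equiv1\pmod{p^{e-f}}$ carefully to simplify, and then still combine with the contribution from $(b^ra^s)^j$.)

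In short: for the symmetric count, either cite \cite{JNS2008} as the paper does, or actually carry out the solvability analysis and the resulting enumeration of isobicyclic quadruples $(i,j,k,l)$. As written, the second half of your argument is a plan, not a proof.
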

\begin{proof}
By Theorem~\ref{main} the automorphism group of a $(p^e,p^e)$-complete regular dessin
is isomorphic to $G:=\mathbf{M}_1(e,e,f)$ for some integer $f$, where $1\leq f\leq e$. By Corollary~\ref{SPAIR1}
the number of exact $(p^e,p^e)$-bicyclic triples of $G$ is $p^{4e-3}(p^2-1)(p-1)$,
and by Corollary~\ref{SAUTO1}, $|G|=p^{4e-3}(p^2-1)(p-1)$ if $f=e$,
and $|\Aut(G)|=p^{2(e+f)-1}(p-1)$ if $f<e$. Thus by Proposition~\ref{PROP2}
for each fixed $f$, up to isomorphism the number of $(p^e,p^e)$-complete regular dessins with
automorphism group isomorphic to $G$ is $1$ if $f=e$, and $p^{2e-2f-2}(p^2-1)$
if $1\leq f<e$. Summing up we obtain the total number of $(p^e,p^e)$-complete
regular dessins (up to isomorphism):
\[
1+\sum_{f=1}^{e-1}p^{2e-2f-2}(p^2-1)=p^{2(e-1)}.
 \]
 By~\cite[Theorem 1]{JNS2008} exactly $p^{e-1}$ of these are symmetric, as claimed.
\end{proof}
Combining Proposition~\ref{PROP2} with Corollary~\ref{SPAIR1}, \ref{SAUTO1} and \ref{SAUTO2} we immediately
obtain the following three results.
\begin{lemma}\label{ENM1}
 Let $d<e$, then for each fixed $f$, up to isomorphism the number $\nu_1(d,e,f)$ of reciprocal pairs of
$(p^d,p^e)$-complete regular dessins with automorphism group isomorphic to $\mathbf{M}_1(d,e,f)$ is
\[
\nu_1(d,e,f)=
\begin{cases}
1,& \text{if $1\leq d<f=e$},\\
p^{e-f-1}(p-1),&\text{if $1\leq d\leq f<e\leq d+f$},\\
p^{d+e-2f-1}(p-1),&\text{if $1\leq f<d<e\leq d+f$.}
\end{cases}
\]
\end{lemma}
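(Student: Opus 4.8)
The plan is to apply Proposition~\ref{PROP2}(ii), which reduces the count $\nu_1(d,e,f)$ to the single ratio $|\mathcal{P}(\mathbf{M}_1(d,e,f))|/|\Aut(\mathbf{M}_1(d,e,f))|$, since $d<e$ means no symmetric dessins occur and each exact bicyclic pair contributes one reciprocal pair. Thus the proof is essentially a bookkeeping exercise: substitute the values of $|\mathcal{P}(G)|$ from Corollary~\ref{SPAIR1}(i) and $|\Aut(G)|$ from Corollary~\ref{SAUTO1} into the formula $\nu(G)=|\mathcal{P}(G)|/|\Aut(G)|$, and then match the parameter constraints recorded in Theorem~\ref{main}(i) with the corresponding cases of Corollary~\ref{SAUTO1}.

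Concretely, I would proceed case by case according to the position of $f$ relative to $d$ and $e$. For $d<f=e$ (equivalently the constraint $1\le d\le f\le e\le d+f$ with $f=e$), Corollary~\ref{SPAIR1}(i) gives $|\mathcal{P}(G)|=p^{3d+e-2}(p-1)^2$ and Corollary~\ref{SAUTO1} (the line $d<f=e$) gives $|\Aut(G)|=p^{3d+e-2}(p-1)^2$, so the quotient is $1$. For $d\le f<e$ with $e\le d+f$, the same corollary gives $|\mathcal{P}(G)|=p^{3d+e-2}(p-1)^2$ while $|\Aut(G)|=p^{3d+f-1}(p-1)$, whence $\nu_1=p^{(3d+e-2)-(3d+f-1)}(p-1)=p^{e-f-1}(p-1)$. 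For $f<d<e$ with $e\le d+f$, again $|\mathcal{P}(G)|=p^{3d+e-2}(p-1)^2$, whereas the last line of Corollary~\ref{SAUTO1} gives $|\Aut(G)|=p^{2(d+f)-1}(p-1)$, so $\nu_1=p^{(3d+e-2)-(2d+2f-1)}(p-1)=p^{d+e-2f-1}(p-1)$. These three computations exhaust the parameter ranges for $\mathbf{M}_1(d,e,f)$ when $d<e$, as can be read off from Table~\ref{TAB}, and they reproduce the claimed formula.

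There is no serious obstacle here; the only point requiring a little care is to verify that the parameter ranges in Corollary~\ref{SAUTO1} and in Theorem~\ref{main}(i) are being paired correctly — in particular that the case $d<f=e$ of the automorphism count is the one relevant to the first line, and that the subdivision $d\le f<e$ versus $f<d<e$ in the automorphism count lines up exactly with the two remaining lines of the stated formula under the standing hypothesis $e\le d+f$ (which is forced by Theorem~\ref{main}(i) whenever $G\cong\mathbf{M}_1(d,e,f)$ with $d<e$). I would also note in passing that $\nu_1$ is a genuine nonnegative power of $p$ times $(p-1)$ in each nontrivial case, which serves as a sanity check on the exponent arithmetic. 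Once these matchings are confirmed, the displayed piecewise formula follows immediately, completing the proof.
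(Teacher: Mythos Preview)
Your proposal is correct and is exactly the approach the paper takes: the paper states that Lemma~\ref{ENM1} follows immediately by combining Proposition~\ref{PROP2} with Corollaries~\ref{SPAIR1} and~\ref{SAUTO1}, and your case-by-case division $|\mathcal{P}(G)|/|\Aut(G)|$ simply makes that sentence explicit. The arithmetic and the matching of parameter ranges are all as in the paper.
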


\begin{lemma}\label{ENM2}
Let $d<e$, then for each fixed $f$, $1\leq f<d<e$, up to isomorphism the number $\nu_2(d,e,f)$ of reciprocal
pairs of $(p^d,p^e)$-complete regular dessins with automorphism group isomorphic to $\mathbf{M}_2(d,e,f)$ is
$p^{2d-2f-1}(p-1)$.
\end{lemma}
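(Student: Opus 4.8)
The plan is to apply Proposition~\ref{PROP2}(ii) directly. The hypotheses $1\le f<d<e$ ensure, via Theorem~\ref{main}, that $G:=\mathbf{M}_2(d,e,f)$ really is the automorphism group of some $(p^d,p^e)$-complete regular dessin, and since $d\neq e$ we are in the reciprocal-pair regime of Proposition~\ref{PROP2}. Hence $\nu_2(d,e,f)=\nu(G)=|\mathcal{P}(G)|/|\Aut(G)|$, the quotient being a positive integer because the action of $\Aut(G)$ on $\mathcal{P}(G)$ is semiregular, as noted in the proof of Proposition~\ref{PROP2}.

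It then remains only to substitute the two counts established earlier. By Corollary~\ref{SPAIR1}(ii) we have $|\mathcal{P}(G)|=p^{3d+e-2}(p-1)^2$, and by Corollary~\ref{SAUTO2}(i) we have $|\Aut(G)|=p^{d+e+2f-1}(p-1)$. Dividing gives
\[
\nu_2(d,e,f)=\frac{p^{3d+e-2}(p-1)^2}{p^{d+e+2f-1}(p-1)}=p^{(3d+e-2)-(d+e+2f-1)}(p-1)=p^{2d-2f-1}(p-1),
\]
and the exponent $2d-2f-1$ is positive precisely because $f<d$, consistent with $\nu_2(d,e,f)$ being a positive integer.

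I do not expect any genuine obstacle here: the lemma is a bookkeeping consequence of Corollary~\ref{SPAIR1} and Corollary~\ref{SAUTO2}, and the only care required is to keep the exponents of $p$ straight in the division. One minor point worth a sentence is that restricting the count to dessins whose automorphism group is \emph{isomorphic to} $\mathbf{M}_2(d,e,f)$ (for the fixed $f$) is legitimate because, by Proposition~\ref{PROP1} together with the pairwise non-isomorphism of the groups in Theorem~\ref{main}, the isomorphism classes of $(p^d,p^e)$-complete regular dessins partition according to their automorphism groups.
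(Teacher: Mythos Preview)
Your proof is correct and follows exactly the approach indicated in the paper, which simply states that the lemma is obtained by combining Proposition~\ref{PROP2} with Corollaries~\ref{SPAIR1} and~\ref{SAUTO2}. Your explicit division $|\mathcal{P}(G)|/|\Aut(G)|=p^{3d+e-2}(p-1)^2/p^{d+e+2f-1}(p-1)=p^{2d-2f-1}(p-1)$ is precisely the intended computation.
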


\begin{lemma}\label{ENM3}
Let $d<e$, then for fixed $h$ and $f$, $h-d\leq f<d<h<e$, up to isomorphism the number $\nu_3(d,e,h,f)$ of
reciprocal pairs of  $(p^d,p^e)$-complete regular dessins  with automorphism group isomorphic to
$\mathbf{M}_3(d,e,h,f)$ is $p^{d+h-2(f+1)}(p-1)^2$.
\end{lemma}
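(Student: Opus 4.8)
The plan is to obtain $\nu_3(d,e,h,f)$ by a direct application of Proposition~\ref{PROP2}(ii) to the group $G=\mathbf{M}_3(d,e,h,f)$. Since $d<e$, the two sides of any exact $(p^d,p^e)$-factorisation have different orders, so part (ii) of that proposition is the one that applies: up to isomorphism $K_{p^d,p^e}$ underlies exactly $\nu(G)=|\mathcal{P}(G)|/|\Aut(G)|$ reciprocal pairs of regular dessins with automorphism group isomorphic to $G$, and $\nu_3(d,e,h,f)$ is by definition this number. I would first note that $\nu(G)$ is automatically a non-negative integer here because, as recorded in the proof of Proposition~\ref{PROP2}, $\Aut(G)$ acts semi-regularly on $\mathcal{P}(G)$; and that by Theorem~\ref{main}(iii) the group $\mathbf{M}_3(d,e,h,f)$ does arise as such an automorphism group precisely in the parameter range $h-d\le f<d<h<e$, so the count is meaningful exactly in the stated range.

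The next step is purely a substitution. By Corollary~\ref{SPAIR1}(iii) we have $|\mathcal{P}(\mathbf{M}_3(d,e,h,f))|=p^{3d+e-2}(p-1)^2$, and by Corollary~\ref{SAUTO2}(ii) we have $|\Aut(\mathbf{M}_3(d,e,h,f))|=p^{2d+e+2f-h}$. Dividing one by the other gives
\[
\nu_3(d,e,h,f)=\frac{p^{3d+e-2}(p-1)^2}{p^{2d+e+2f-h}}=p^{(3d+e-2)-(2d+e+2f-h)}(p-1)^2=p^{\,d+h-2f-2}(p-1)^2=p^{\,d+h-2(f+1)}(p-1)^2,
\]
which is the asserted formula. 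As a consistency check on the exponent, the hypotheses $h-d\le f$ and $f\le d-1$ (the latter from $f<d$) give $d+h-2f-2\ge d+h-2(d-1)-2=h-d\ge1$, so the exponent is indeed a positive integer, as it must be.

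I do not expect any real obstacle in this particular argument: all the substantive work has already been carried out in Lemmas~\ref{TRIPLE} and \ref{AUTO2}, from which Corollaries~\ref{SPAIR1}(iii) and \ref{SAUTO2}(ii) were deduced, and what remains is bookkeeping. The only points that need a moment's attention are formal: confirming that it is Proposition~\ref{PROP2}(ii) rather than (i) that applies here (because $p^d\ne p^e$, so no factor $\tfrac12$ and no separate count of symmetric dessins enters), and confirming that the range of $(d,e,h,f)$ quoted in the statement coincides with the range in which $\mathbf{M}_3(d,e,h,f)$ appears in Theorem~\ref{main}(iii).
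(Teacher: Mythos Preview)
Your proposal is correct and follows exactly the paper's approach: the paper derives Lemmas~\ref{ENM1}--\ref{ENM3} in one stroke by ``combining Proposition~\ref{PROP2} with Corollary~\ref{SPAIR1}, \ref{SAUTO1} and \ref{SAUTO2},'' and you have simply made this division $|\mathcal{P}(G)|/|\Aut(G)|=p^{3d+e-2}(p-1)^2/p^{2d+e+2f-h}$ explicit for $G=\mathbf{M}_3(d,e,h,f)$.
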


Now we are ready to prove the main result of the paper.

\medskip

\noindent{\bf Proof of Theorem~\ref{MAIN}:} Let $G$ denote the automorphism group of a $(p^d,p^e)$-complete
regular dessin. If $d=0$ then $G$ is a cyclic $p$-group of
order $p^e$, so by Example~\ref{UNIQUE} $\nu(d,e)=1$. If $1\leq d=e$, then by Lemma~\ref{ENM0}, the number
of nonsymmetric dessins is $p^{2(e-1)}-p^{e-1}$, so
\[
\nu(d,e)=p^{e-1}+\frac{1}{2}(p^{2(e-1)}-p^{e-1})=\frac{1}{2}p^{e-1}(1+p^{e-1}).
  \]
In what follows we assume that $1\leq d<e$.
We distinguish four cases:
\begin{case}[1] $1=d<e$. By Theorem~\ref{main} $G\cong\mathbf{M}_1(1,e,f)$ where the numerical
conditions are reduced to $1\leq f\leq e\leq 1+f$. Thus either $f=e-1$ or $f=e$.
By Lemma~\ref{ENM1} $\nu(d,e)=\nu_1(d,e,e-1)+\nu_1(d,e,e)=(p-1)+1=p.$
 \end{case}

 \begin{case}[2] $1<d=e-1$. By Theorem~\ref{main} either $G\cong\mathbf{M}_1(e-1,e,f)$
 where $1\leq f\leq e$, or $G\cong\mathbf{M}_2(e-1,e,f)$ where $1\leq f< e-1$. By Lemma~\ref{ENM1}
 and \ref{ENM2} we get
 \begin{align*}
 \nu(e-1,e)
&=\sum_{1\leq f\leq e}\nu_1(e-1,e,f)+\sum_{1\leq f<e-1}\nu_2(e-1,e,f)\\
&=1+(p-1)+\sum_{1\leq f< e-1}p^{2(e-1)-2f}(p-1)+\sum_{1\leq f<e-1}p^{2(e-1)-2f-1}(p-1)\\
&=p+(p-1)(p^{2e-2}+p^{2e-3})\sum_{1\leq f<e-1}p^{-2f}\\
&=p^{2e-3}.
 \end{align*}
 \end{case}

  \begin{case}[3] $1<d<e-1$ and $e<2d$. Combing the hypothesis with the numerical conditions
   in Theorem~\ref{main} we see that the following subcases may happen:
  (3.1)~$G\cong\mathbf{M}_1(d,e,f)$ where $e-d\leq f\leq e$;
  (3.2)~$G\cong\mathbf{M}_2(d,e,f)$ where $1\leq f<d$;
  (3.3)~$G\cong\mathbf{M}_3(d,e,h,f)$ where $d<h<e$ and $h-d\leq f<d$.
Thus, by Lemma~\ref{ENM1}, ~\ref{ENM2} and ~\ref{ENM3}, we have
\begin{align*}
\nu(d,e)
&=\sum_{e-d\leq f\leq e}\nu_1(d,e,f)+\sum_{1\leq f<d}\nu_2(d,e,f)+\sum_{d<h<e}\sum_{h-d\leq f<d}\nu_3(d,e,h,f)\\
=&1+\sum_{d\leq f<e}p^{e-f-1}(p-1)+\sum_{e-d\leq f<d}p^{d+e-2f-1}(p-1)+\sum_{1\leq f<d}p^{2d-2f-1}(p-1)\\
&+\sum_{d<h<e}\sum_{h-d\leq f<d}p^{d+h-2(f+1)}(p-1)^2\\
=&p^{e-d}+\frac{1}{p+1}(p^{3d-e+1}-p^{e-d+1})+\frac{1}{p+1}(p^{2d-1}-p)+\frac{1}{p+1}
(p^{2d}-p^{3d-e+1}-p^{e-d}+p)\\
=&p^{2d-1}.
\end{align*}
 \end{case}

  \begin{case}[4] $1<d<e-1$ and $e\geq 2d$. By Theorem~\ref{main} the following subcases may happen:
  (4.1)~$G\cong\mathbf{M}_1(d,e,f)$ where $e-d\leq f\leq e$; (4.2)~$G\cong\mathbf{M}_2(d,e,f)$ where $1\leq f<d$;
  (4.3)~$G\cong\mathbf{M}_3(d,e,f)$ where $d<h<2d$ and $h-d\leq f<d$.
Thus, by Lemma~\ref{ENM1}, ~\ref{ENM2} and ~\ref{ENM3}, we have
\begin{align*}
\nu(d,e)
&=\sum_{e-d\leq f\leq e}\nu_1(d,e,f)+\sum_{1\leq f<d}\nu_2(d,e,f)+\sum_{d<h<2d}\sum_{h-d\leq f<d}\nu_3(d,e,f)\\
&=p^d+\frac{1}{p+1}[(p^{2d-1}-p)+(p^{2d}-p^{1+d}-p^d+p)]=p^{2d-1}.
\end{align*}
 \end{case}

Finally,  for each $(p^d,p^e)$-complete regular dessin $\mathcal{D}=(G,\alpha,\beta)$ where
$(\alpha,\beta)$ is an exact $(p^d,p^e)$-bicyclic pair of $G$ given by Lemma~\ref{TRIPLE},
to determine its type $(|\alpha|,|\beta|,|\alpha\beta|)$ it suffices to evaluate $|\alpha\beta|$.
It is easy to check that in all cases $|\alpha\beta|=p^e$. The genus of $\mathcal{D}$
follows from Euler-Poincar\'e formula.\qed

\begin{remark}
Let $(\varphi,\varphi^*)$ be a pair of skew-morphisms $\varphi$ and $\varphi^*$ of the cyclic groups $\mathbb{Z}_n$ and $\mathbb{Z}_m$, and $\pi$ and $\pi^*$ the associated power functions, respectively. The skew-morphism pair $(\varphi,\varphi^*)$ will be called \textit{reciprocal} if they satisfy the following conditions:
\begin{itemize}
\item[\rm(i)] the orders of $\varphi$ and ${\varphi^*}$ divide
    $m$ and $n$, respectively,
\item[\rm(ii)]$\pi(x)=-{\varphi^*}^{-x}(-1)$ and
    ${\pi^*}(y)=-\varphi^{-y}(-1)$ are power functions for
    $\varphi$ and ${\varphi^*}$, respectively.
\end{itemize}
In \cite[Theorem 6]{FHNSW} the authors establish a one-to-one correspondence between the isomorphism
classes of reciprocal pairs of
 $(m,n)$-regular dessins and reciprocal pairs $(\varphi,\varphi^*)$ of skew-morphisms $\varphi$ and $\varphi^*$
of the cyclic groups
$\mathbb{Z}_n$ and $\mathbb{Z}_m$. Therefore, Theorem~\ref{MAIN} also gives rise to
the number of reciprocal pairs of skew-morphisms $\varphi$ and $\varphi^*$ of the cyclic groups
$\mathbb{Z}_{p^d}$ and $\mathbb{Z}_{p^e}$ for odd prime $p$.
\end{remark}

\section*{Acknowledgement}
The first and third author was partially supported by Natural Science Foundation of Zhejiang Province
(No.~LY16A010010, LQ17A010003), and the second author was supported by projects P202/12/G061 of the Czech
Science Foundation and APVV-15-0220 of the Slovak Agency for Research and Development.

 \end{document}